\newtheorem{theorem}{Theorem}[section]
\newtheorem{corollary}{Corollary}[theorem]
\newtheorem{lemma}{Lemma}[section]
\newtheorem{proposition}{Proposition}[section]
\theoremstyle{remark}
\newtheorem{remark}{Remark}[section]
\newcommand{\R}{\mathbb R}
\newcommand{\E}{\mathbf {E}}
\newcommand{\N}{\mathbb {N}}
\title{Fractional integrals, derivatives and integral equations with weighted Takagi–Landsberg functions}
\author{Vitalii Makogin, Yuliya Mishura}
\date{\today}
\begin{document}

\maketitle

\abstract{In this paper  we find fractional Riemann–Liouville derivatives for the Takagi-Landsberg functions. Moreover, we introduce their generalizations called weighted Takagi-Landsberg functions which have arbitrary bounded coefficients in the expansion under Schauder basis. The class of the weighted Takagi-Landsberg functions of order $H>0$ on $[0,1]$ coincides with the $H$-H\"{o}lder continuous functions on $[0,1]$. Based on computed fractional integrals and derivatives of the Haar and Schauder functions, we get a new series representation of the fractional derivatives of a H\"{o}lder continuous function. This result allows to get the new formula of a Riemann-Stieltjes integral. The application of such series representation is the new method of numerical solution of the Volterra and linear integral equations driven by a H\"{o}lder continuous function.}
\section{Introduction}

The aim of this paper is to get a broad class of continuous functions on [0,1], which are nowhere differentiable but have fractional derivatives.
The prominent example is the Takagi–Landsberg function with Hurst parameter $H>0$ introduced in \cite{landsberg}, given by
\begin{equation}
\label{defKL}x^H(t)=\sum_{m=0}^\infty 2^{m\left(\frac{1}{2}-H\right)}\sum_{k=0}^{2^m-1}e_{m,k}(t),t\in [0,1],
\end{equation}
where $\{e_{m,k},m\in \N_0,k=0,\ldots,2^m-1\}$ are the Faber-Schauder functions on [0,1].
In the present paper, we find the fractional derivatives of the Takagi-Landsberg functions, and for other properties we refer to the surveys \cite{allaart} and \cite{Lagarias}.
In the case $H=1/2,$ the function $x^H$ is  known as the Takagi function.

There are several generalizations of the  function $x^H.$ In the paper of Mishura and Schied \cite{ms}, the signed Takagi-Landsberg functions of the form $$\sum_{m=0}^\infty 2^{m\left(\frac{1}{2}-H\right)}\sum_{k=0}^{2^m-1}\theta_{m,k}e_{m,k}(t),t\in [0,1] \text{ with } \theta_{m,k}\in\{-1,+1\}$$ are considered. Their results concern the maximum, the maximizers, and the modulus of continuity. Particularly, it was shown that $\max_{t\in[0,1]}x^H(t)=\frac{1}{3(1-2^{-H})}.$ The case of $H=1/2$ is considered in \cite{Schied}, where the connections to the F\"{o}lmer's pathwise It\^{o} calculus (e.g. \cite{folmer}) is also described.

In the present paper we go further and introduce so-called weighted Takagi-Landsberg functions, for which we let $\theta_{m,k}$ be arbitrary bounded coefficients.  We show that such weighted Takagi-Landsberg functions coincides with the H\"{o}lder continuous functions which immediately gives the new series representation for them, which we call a Takagi-Landsberg representation. Then we compute the fractional Riemann–Liouville derivatives and integrals of the Faber-Schauder functions, and therefore we obtain the fractional derivatives of the (weighted) Takagi-Landsberg functions. Such a new series representation of the fractional derivative for H\"{o}lder continuous functions is very promising for further development of the continuous functions without derivatives. Particularly, the Takagi-Landsberg representation gives the new method for numerical solution of the integral equations involving H\"{o}lder continuous functions.

As an example, we consider the Volterra integral equation with fractional noise, called also fractional Langevin equation, e.g. \cite{pironi,Fa}. This equation is of interest for modeling of anomalous diffusion in physics (e.g. \cite{Lutz}, \cite{kobelev}) and financial markets (e.g. \cite{west}).
Our method of its numerical solution allows to reduce it to  the system of linear algebraic equations, which is computationally effective. We prove that the numerical solution of the fractional Langevin equation, due to our method, approaches the theoretical solution, which is illustrated by numerical examples.

We obtain also the series expansion of the Riemann-Stieltjes integral applying methodology based on fractional Rieman-Liuville integrals introduced in \cite{Zahle} and developed in \cite{nualart}. As an illustration we consider the linear differential equation driven by H\"{o}lder continuous function and prove that its numerical solution due to our method tends to the exact solution in the specific norm. This result are supported also by numerical examples.

The paper is organized as follows. In Section 2, we recal some basic definitions from fractional calculus and Schauder basis. In Section 3, we compute fractional Riemann–Liouville integrals and derivatives of the Haar (Section 3.1) and the Faber-Schauder (Section 3.2) functions. In Section 4, we introduce the weighted Takagi-Landsberg functions and obtain the series representations of their Riemann–Liouville  derivatives. The series expansion of the Riemann-Stieltjes integral is given in Section 5. In Section 6, we consider the application of the Takagi-Landsberg representation for the solution of the Volterra integral (Section 6.1) and linear differential (Section 6.2) equations. The numerical results are presented in Sections 6.3 and 6.4.

\section{Preliminaries}
First we recall the definitions of fractional Riemann-Liouville   integrals and derivatives and their basic properties.
%Throughout the paper we assume that $\alpha\in (0,1)$ and consider the functions defined on interval $[0,T].$
Let $f\in L_1([0,T]).$ We define left- and right-sided fractional integrals of order $\alpha>0$ on $(0,T)$  by
\begin{align}
  \label{frint1}  [I_{0+}^{\alpha}f](t)&:=\frac{1}{\Gamma(\alpha)}\int_0^{t}(t-u)^{\alpha-1}f(u)du,\\
 \label{frint2}     [I_{T-}^{\alpha}f](t)&:=\frac{1}{\Gamma(\alpha)}\int_t^{T}(u-t)^{\alpha-1}f(u)du,
\end{align}
respectively (cf. \cite[Definition 2.1]{skm}).

Define the spaces of functions that can be represented as fractional integrals:
$$I_+^{\alpha}(L_p([0,T])):=\{f\in L_1([0,T]):\exists \varphi\in L_p([0,T]) \text{ such that } f=I_{0+}^{\alpha}\varphi\},$$
$$I_-^{\alpha}(L_p([0,T])):=\{f\in L_1([0,T]):\exists \varphi\in L_p([0,T]) \text{ such that } f=I_{T-}^{\alpha}\varphi\}.$$
From \cite[formula (2.19)]{skm}  it follows that  $I_+^{\alpha}(L_p([0,T]))=I_-^{\alpha}(L_p([0,T]))$ for $1<p<\frac{1}{\alpha}.$

For the functions from $I_+^{\alpha}(L_1([0,T]))=I_-^{\alpha}(L_1([0,T]))$ we define
the left- and right-sided fractional Riemann-Liouville  derivatives on $(0,T)$ of order $\alpha$ by
\begin{align}
 \label{frder2}   [D_{0+}^{\alpha}f](t)&=\frac{1}{\Gamma(1-\alpha)}\frac{d}{dt}\int_0^{t}(t-u)^{-\alpha}f(u)du\\
\label{frder3}    [D_{T-}^{\alpha}f](t)&=-\frac{1}{\Gamma(1-\alpha)}\frac{d}{dt}\int_t^{T}(u-t)^{-\alpha}f(u)du.
\end{align}

Recall that the Faber–Schauder functions are defined as
$$e_{\emptyset}(t) := t,\quad e_{0,0}(t) :=(\min\{t, 1 -t\})^+, \quad e_{m,k}(t) :=2^{-m/2}e_{0,0}(2^m t - k)$$
for $t\in \R, m\in \N, k \in \N_0.$
They can be expressed in terms of Haar functions $H_{m,k}$ as
\begin{equation}
    e_{m,k}(t)=\int_0^tH_{m,k}(s)ds= [I_{0+}^1H_{m,k}](t), \text{ where }
    H_{m,k}(s)=2^{\frac{m}{2}}\mathbbm{1}_{J_{m,k}}(s)-2^{\frac{m}{2}}\mathbbm{1}_{J_{m,k+0.5}}(s),
\end{equation}
 and $J_{m,t}:=\left(\frac{t}{2^{m}},\frac{t+0.5}{2^{m}}\right],$ $m\in \N_0,k=0,\ldots,2^m-1,t\in [0,1].$

The Faber-Schauder functions form a Schauder basis in $C([0,1])$ and produce the following expansion of a function $f\in C([0,1])$ (e.g. \cite{kashin})
\begin{align}
\label{eq:expan}f(t)&=f(0)+(f(1)-f(0))t+\sum_{m=0}^\infty \sum_{k=0}^{2^m-1} 2^{\frac{m}{2}} a_{m,k}e_{m,k}(t), t\in [0,1],
\end{align}
with coefficients
\begin{equation*}
    \label{coef1} a_{m,k}=2f\left(\frac{k+0.5}{2^m}\right)-f\left(\frac{k+1}{2^m}\right)-f\left(\frac{k}{2^m}\right).
\end{equation*}

\section{Fractional derivatives of the Takagi–Landsberg function}

\subsection{Haar functions}
In this section, we calculate the fractional integrals and derivatives of the Haar functions.

\begin{lemma}
\label{haarintegr}Let $\alpha>0,$ $T> 0,$ $k,m\in \N_0$ and $0\leq k<2^m.$ Then for $t\in (0,1)$ we have
\begin{equation}
  \label{Ih0mk} I_{0+}^{\alpha}H_{m,k}(t)=\frac{2^{\frac{m}{2}}}{\Gamma(1+\alpha)}\left(\left(t-\frac{k}{2^m}\right)^{\alpha}_+-2\left(t-\frac{k+0.5}{2^m}\right)^{\alpha}_+ +\left(t-\frac{k+1}{2^m}\right)^{\alpha}_+\right),
\end{equation}
 and
\begin{equation}
  \label{IhTmk}
I_{T-}^{\alpha}H_{m,k}(t)=\frac{2^{\frac{m}{2}}}{\Gamma(1+\alpha)}\left(2\left(T\wedge \frac{k+0.5}{2^m}-t\right)_+^{\alpha}-\left(T\wedge \frac{k}{2^m}-t\right)_+^{\alpha}-\left(T\wedge \frac{k+1}{2^m}-t\right)_+^{\alpha}\right),
\end{equation}
for $t\in (0,T).$
\end{lemma}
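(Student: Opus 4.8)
The plan is to exploit the linearity of the Riemann--Liouville operators together with the fact that $H_{m,k}$ is, up to the factor $2^{m/2}$, the difference of the indicators of two adjacent dyadic half-intervals. Writing $a=\tfrac{k}{2^m}$, $b=\tfrac{k+0.5}{2^m}$, $c=\tfrac{k+1}{2^m}$, one has $J_{m,k}=(a,b]$ and $J_{m,k+0.5}=(b,c]$, so that $H_{m,k}=2^{m/2}\bigl(\mathbbm{1}_{(a,b]}-\mathbbm{1}_{(b,c]}\bigr)$. It therefore suffices to compute $I_{0+}^\alpha$ and $I_{T-}^\alpha$ of a single indicator $\mathbbm{1}_{(p,q]}$ and then add the two contributions.

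For the left-sided integral I would insert $\mathbbm{1}_{(p,q]}$ into the definition \eqref{frint1}, reducing the computation to $\frac{1}{\Gamma(\alpha)}\int_{(p,q]\cap(0,t)}(t-u)^{\alpha-1}\,du$. Splitting into the three cases $t\le p$, $p<t\le q$ and $t>q$ and evaluating the elementary antiderivative $-(t-u)^{\alpha}/\alpha$ over the relevant interval gives in each case a value captured uniformly by $[I_{0+}^{\alpha}\mathbbm{1}_{(p,q]}](t)=\frac{1}{\Gamma(\alpha+1)}\bigl((t-p)_+^{\alpha}-(t-q)_+^{\alpha}\bigr)$, using that $(x)_+^\alpha$ vanishes for $x\le 0$. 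Applying this with $(p,q)=(a,b)$ and $(p,q)=(b,c)$ and subtracting produces exactly \eqref{Ih0mk}, where the two contributions at the midpoint $b$ combine into the coefficient $-2$.

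The right-sided integral is the more delicate part, and I expect the main obstacle to be the interaction between the upper endpoint $T$ in \eqref{frint2} and the support $(p,q]$ of the indicator. Here the integral is $\frac{1}{\Gamma(\alpha)}\int_{(p,q]\cap(t,T)}(u-t)^{\alpha-1}\,du$, so the effective limits become $t\vee p$ and $T\wedge q$, and the antiderivative $(u-t)^{\alpha}/\alpha$ must be evaluated between them. I would run a case analysis on the relative positions of $t,p,q,T$ (whether $T$ falls before $p$, inside $(p,q]$, or beyond $q$, and whether $t$ lies left of $p$, inside $(p,q]$, or to the right of $q$), checking that in every case the outcome equals $\frac{1}{\Gamma(\alpha+1)}\bigl((T\wedge q-t)_+^{\alpha}-(T\wedge p-t)_+^{\alpha}\bigr)$; the truncation by $T$ is precisely what forces the minimum with $T$ inside each positive-part term. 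In particular I would verify the degenerate situations $T\le p$ and $t\ge q$, where both positive parts vanish and the formula correctly returns $0$.

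Finally I would substitute $(p,q)=(a,b)$ and $(p,q)=(b,c)$ into this single-indicator formula, subtract, multiply by $2^{m/2}$, and observe that the two terms involving $T\wedge b$ add up, yielding the coefficient $2$ in front of $(T\wedge\tfrac{k+0.5}{2^m}-t)_+^\alpha$ and hence exactly \eqref{IhTmk}. The only points requiring care throughout are the convention $(x)_+^\alpha=0$ for $x\le0$ and the bookkeeping of the minimum with $T$, both of which are handled by the case analysis above.
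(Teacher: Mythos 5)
Your proposal is correct, and the computational core (explicit antiderivatives of $(t-u)^{\alpha-1}$ and $(u-t)^{\alpha-1}$, plus a case analysis on the location of $t$) is the same as in the paper; what differs is the organization, and the difference is worth noting. The paper runs the case analysis directly on $H_{m,k}$: for $I_{0+}^{\alpha}$ it treats $t\in J_{m,k}$, $t\in J_{m,k+0.5}$ and $t>\frac{k+1}{2^m}$ separately, recomputes the integral of the full Haar function in each regime, and then assembles the positive-part formula; it argues symmetrically (with the extra $T\wedge$ bookkeeping in every case) for $I_{T-}^{\alpha}$. You instead factor the computation through a single reusable indicator lemma,
\begin{align*}
I_{0+}^{\alpha}\mathbbm{1}_{(p,q]}(t)&=\frac{1}{\Gamma(1+\alpha)}\bigl((t-p)_+^{\alpha}-(t-q)_+^{\alpha}\bigr),\\
I_{T-}^{\alpha}\mathbbm{1}_{(p,q]}(t)&=\frac{1}{\Gamma(1+\alpha)}\bigl((T\wedge q-t)_+^{\alpha}-(T\wedge p-t)_+^{\alpha}\bigr),
\end{align*}
and then use linearity together with $H_{m,k}=2^{m/2}\bigl(\mathbbm{1}_{(a,b]}-\mathbbm{1}_{(b,c]}\bigr)$. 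This buys two things: the midpoint coefficients $-2$ and $+2$ emerge from the algebra rather than having to be tracked case by case, and the interaction with the upper limit $T$ is analyzed once rather than threaded through three separate regimes. One small correction for the write-up: in the degenerate situation $T\le p$ of the right-sided formula, the two positive parts do not vanish (since $t<T$ they both equal $(T-t)^{\alpha}>0$); they cancel. The conclusion that the formula returns $0$ is still right, but the justification should say cancellation, not vanishing.
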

\begin{proof}
If $t<\frac{k}{2^m},$ then $I_{0+}^{\alpha}H_{m,k}(t)=0.$ Let $t\in J_{m,k},$ then
\begin{align}
  \label{int:h1:1}  I_{0+}^{\alpha}H_{m,k}(t)=\frac{2^{\frac{m}{2}}}{\Gamma(\alpha)}\int_{k/2^m}^{t}(t-u)^{\alpha-1}du=\frac{2^{\frac{m}{2}}}{\Gamma(1+\alpha)}\left(t-\frac{k}{2^{m}}\right)^{\alpha}.
\end{align}
Let $t\in J_{m,k+0.5},$ then
\begin{align}
\nonumber   I_{0+}^{\alpha}H_{m,k}(t)&=\frac{2^{\frac{m}{2}}}{\Gamma(\alpha)}\left(\int_{k/2^m}^{(k+0.5)/2^m}(t-u)^{\alpha-1}du-\int_{(k+0.5)/2^m}^{t}(t-u)^{\alpha-1}du\right)\\
  \label{int:h1:2} &=\frac{2^{\frac{m}{2}}}{\Gamma(1+\alpha)}\left(\left(t-\frac{k}{2^{m}}\right)^{\alpha}-2\left(t-\frac{k+0.5}{2^{m}}\right)^{\alpha}\right).
\end{align}
If $t> \frac{k+1}{2^{m}},$ then
\begin{align}
\nonumber I_{0+}^{\alpha}H_{m,k}(t)&=\frac{2^{\frac{m}{2}}}{\Gamma(\alpha)}\left(\int_{k/2^m}^{(k+0.5)/2^m}(t-u)^{\alpha-1}du-\int_{(k+0.5)/2^m}^{(k+1)/2^m}(t-u)^{\alpha-1}du\right)\\
  \label{int:h1:3}   &=\frac{2^{\frac{m}{2}}}{\Gamma(1+\alpha)}\left(\left(t-\frac{k}{2^{m}}\right)^{\alpha}-2\left(t-\frac{k+0.5}{2^{m}}\right)^{\alpha}+\left(t-\frac{k+1}{2^{m}}\right)^{\alpha}\right).
\end{align}
Summarizing \eqref{int:h1:1}--\eqref{int:h1:3}, we get the statement \eqref{Ih0mk}.

Now prove relation \eqref{IhTmk}.
Obviously, if $T<\frac{k}{2^m}$ or $t>\frac{k+1}{2^m},$ then $I_{T-}^{\alpha}H_{m,k}(t)=0.$
 Let $t\in J_{m,k+0.5},$ then
\begin{align}
  \label{int:h2:1}  I_{T-}^{\alpha}H_{m,k}(t)=-\frac{2^{\frac{m}{2}}}{\Gamma(\alpha)}\int_t^{T\wedge \frac{k+1}{2^m} }(u-t)^{\alpha-1}du=-\frac{2^{\frac{m}{2}}}{\Gamma(1+\alpha)}\left(T\wedge \frac{k+1}{2^m}-t\right)^{\alpha}.
\end{align}
Let $t\in J_{m,k},$ then
\begin{align}
\nonumber  I_{T-}^{\alpha}H_{m,k}(t)&=\frac{2^{\frac{m}{2}}}{\Gamma(\alpha)}\left(\int_t^{T\wedge \frac{k+0.5}{2^m} }(u-t)^{\alpha-1}du-\int_{T\wedge \frac{k+0.5}{2^m}}^{T\wedge \frac{k+1}{2^m} }(u-t)^{\alpha-1}du\right)\\
  \label{int:h2:2}  &=\frac{2^{\frac{m}{2}}}{\Gamma(1+\alpha)}\left(2\left(T\wedge \frac{k+0.5}{2^m}-t\right)^{\alpha}-\left(T\wedge \frac{k+1}{2^m}-t\right)^{\alpha}\right).
\end{align}
If $t< \frac{k}{2^{m}},$ then
\begin{align}
\nonumber    &I_{T-}^{\alpha}H_{m,k}(t)=\frac{2^{\frac{m}{2}}}{\Gamma(\alpha)}\left(\int_{T\wedge \frac{k}{2^m}}^{T\wedge \frac{k+0.5}{2^m} }(u-t)^{\alpha-1}du-\int_{T\wedge \frac{k+0.5}{2^m}}^{T\wedge \frac{k+1}{2^m} }(u-t)^{\alpha-1}du\right)\\
\label{int:h2:3}  &=\frac{2^{\frac{m}{2}}}{\Gamma(1+\alpha)}\left(-\left(T\wedge \frac{k}{2^m}-t\right)^{\alpha}+2\left(T\wedge \frac{k+0.5}{2^m}-t\right)^{\alpha}-\left(T\wedge \frac{k+1}{2^m}-t\right)^{\alpha}\right).
\end{align}
Summarizing \eqref{int:h2:1}--\eqref{int:h2:3}, we get the statement \eqref{IhTmk}.
\end{proof}

For $m\in \N_0,k=0,\ldots,2^m-1,H>0$ denote by
\begin{equation}
\label{taudef}\tau^{\alpha}_{1,2^m+k}(t)=\frac{\left( t-\frac{k}{2^m}\right)_+^{\alpha} - 2\left( t-\frac{k+0.5}{2^m}\right)_+^{\alpha}+\left( t-\frac{k+1}{2^m}\right)_+^{\alpha}}{\Gamma(1+\alpha)}, t\in [0,1],\alpha\geq 0,
\end{equation}
and
\begin{equation*}
\label{taudef2}\tau^{\alpha}_{2,2^m+k}(t,T)=\frac{2\left(T\wedge \frac{k+0.5}{2^m}-t\right)_+^{\alpha}-\left(T\wedge \frac{k}{2^m}-t\right)_+^{\alpha}-\left(T\wedge \frac{k+1}{2^m}-t\right)_+^{\alpha}}{\Gamma(1+\alpha)}, [0,T],\alpha\geq 0.
\end{equation*}

Then $I_{0+}^{\alpha}H_{m,k}(t)=2^{\frac{m}{2}}\tau^{\alpha}_{1,2^m+k}(t)$ and $I_{T-}^{\alpha}H_{m,k}(t)=2^{\frac{m}{2}}\tau^{\alpha}_{2,2^m+k}(t,T).$
\begin{remark}
\label{taub1}
We give immediate bounds for $\tau^{\alpha}_{1,2^m+k}$ and $\tau^{\alpha}_{2,2^m+k}.$ For instance, for any $m\in \N_0$ and $k=0,\ldots,2^{m-1}$ we have
\begin{align*}&|\tau^{\alpha}_{1,2^m+k}(t)|= |I_{0+}^{\alpha}2^{-\frac{m}{2}}H_{m,k}(t)|\leq I_{0+}^{\alpha}[\mathbbm{1}_{J_{m,k}}+\mathbbm{1}_{J_{m,k+0.5}}](t)\leq I_{0+}^{\alpha}[\mathbbm{1}_{J_{m,k}}+\mathbbm{1}_{J_{m,k+0.5}}](1)\\
&=\frac{1}{\Gamma(\alpha)}\int_{\frac{k}{2^m}}^{\frac{k+1}{2^m}}(1-u)^{\alpha-1}du=\frac{1}{\Gamma(1+\alpha)}\left(\left(1-\frac{k+1}{2^m}\right)^\alpha-\left(1-\frac{k}{2^m}\right)^\alpha\right)\leq \frac{2^{-m\alpha}}{\Gamma(1+\alpha)}.
\end{align*}
Similarly, we get that $|\tau^{\alpha}_{1,2^m+k}(t)|\leq I_{T-}^{\alpha}\mathbbm{1}_{J_{m,k}\cup J_{m,k+0.5}}(t)\leq  I_{1-}^{\alpha}\mathbbm{1}_{J_{m,k}\cup J_{m,k+0.5}}(0)\leq \frac{2^{-m\alpha}}{\Gamma(1+\alpha)}.$
\end{remark}
\begin{remark}
\label{fgn}
One can observe that functions $\tau^{\alpha}_{1,2^m+k}$ and $\tau^{\alpha}_{2,2^m+k}$ can be written in terms of a fractional Gaussian noise with Hurst index $H\in (0,1),$ that is a centered Gaussian process with  the covariance function
\begin{equation*}
    \label{Cdef}\E [Y^H(t)Y^H(0)]=C_H(t)=\frac{1}{2}\left(|t+1|^{2H}-2|t|^{2H}+|t-1|^{2H}\right),t\in \R.
\end{equation*}
Indeed,  $$\tau^{\alpha}_{1,2^m+k}(t)=\frac{2^{1-\alpha}}{2^{m \alpha}\Gamma(1+\alpha)} C_{\alpha/2}\left(2^{m+1}t-2k-1 \right), \text{ if }  t\geq \frac{k+1}{2^m}, $$
and
$$\tau^{\alpha}_{2,2^m+k}(t,T)=-\frac{2^{1-\alpha}}{2^{m \alpha}\Gamma(1+\alpha)} C_{\alpha/2}\left(2k+1-2^{m+1}t \right), \text{ if }  t\leq \frac{k}{2^m} \text{ and }  T\geq \frac{ k+1}{2^m}.$$

Since $\frac{\alpha}{2}\in \left(0,\frac{1}{2}\right)$ if $\alpha\in (0,1),$ we can study properties of the integrals $I_{0+}^{\alpha}H_{m,k}$ and $I_{T-}^{\alpha}H_{m,k}$ using the known results about $C_H$ with $H<1/2.$ For instance, it is known that $C_H(t)< 0$ if $t\geq 1$ and $H\in (0,1/2).$

Further, we use the fact that function $C_H$ in the case $H<1/2$ is absolutely integrable and monotonically increasing on $[1,+\infty),$ e.g. \cite[Section 3.2]{embr}
\end{remark}

\begin{remark}
\label{taub2}
We provide some auxiliary bounds for functions  $\tau^{\alpha}_{1,2^m+k}$ and $\tau^{\alpha}_{2,2^m+k}.$ Let $\alpha\in (0,1),$ then $C_{\frac{\alpha}{2}}(x)$ is  negative and monotonically increasing for $x\geq 1,$ which gives that $|C_{\frac{\alpha}{2}}(x)|\leq |C_{\frac{\alpha}{2}}(\left\lfloor x\right\rfloor)|, x\geq 1.$
Therefore,  $\tau^{\alpha}_{1,2^m+k}(t)$ is negative for $k\leq \left\lfloor 2^m t\right\rfloor-1$  and
\begin{align*}
    \frac{2^{m\alpha}\Gamma(1+\alpha)}{2^{1-\alpha}}|\tau^{\alpha}_{1,2^m+k}(t)|= \left|C_{\alpha/2}\left(2^{m+1}t-2k-1 \right)\right|\leq \left|C_{\alpha/2}\left(2 \left\lfloor 2^m t\right\rfloor-2k-1 \right)\right|.
\end{align*}
Similarly, if  $\left\lfloor{2^m t}\right\rfloor+1\leq k\leq \left\lfloor{2^m T}\right\rfloor-1,$  then
\begin{align*}
    \frac{2^{m\alpha}\Gamma(1+\alpha)}{2^{1-\alpha}}|\tau^{\alpha}_{2,2^m+k}(t,T)|= \left|C_{\alpha/2}\left(1+2k-2^{m+1}t \right)\right|\leq \left|C_{\alpha/2}\left(2k- \left\lfloor 2^m t\right\rfloor-1 \right)\right|.
\end{align*}
\end{remark}
\subsection{The Faber-Schauder functions}
Here, we find the fractional integrals and derivatives of the Faber-Schauder functions.
\begin{lemma}
\label{lemmaIe}
Let $\alpha\in (0,1),$ $T>0,$ $k,m\in \N_0$ and $0\leq k<2^m.$ Then for $t\in (0,1)$ we have
$I_{0+}^{\alpha}e_{m,k}(t)=I_{0+}^{1+\alpha}H_{m,k}(t)$ and $I_{T-}^{\alpha}e_{m,k}(t)=e_{m,k}(T)I_{T-}^{\alpha}\mathbbm{1}_{[0,1]}(t)- I_{T-}^{1+\alpha}H_{m,k}(t)$, $t\in(0,T).$
\end{lemma}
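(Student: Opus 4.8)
The plan is to reduce both identities to the semigroup (composition) property of Riemann–Liouville integrals, namely $I_{0+}^{\alpha}I_{0+}^{\beta}=I_{0+}^{\alpha+\beta}$ and $I_{T-}^{\alpha}I_{T-}^{\beta}=I_{T-}^{\alpha+\beta}$ for $\alpha,\beta>0$, which is classical for $L_1$ functions (cf. \cite{skm}). Note that $H_{m,k}$ is bounded with support in $[0,1]$, hence lies in $L_1$, so $I_{0+}^1H_{m,k}$, $I_{T-}^1H_{m,k}$ and all the integrals below are well defined.

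The left-sided identity is then immediate. Using the representation $e_{m,k}=I_{0+}^1H_{m,k}$ recalled in the preliminaries, I would apply $I_{0+}^{\alpha}$ and invoke the composition rule with $\beta=1$:
\[
I_{0+}^{\alpha}e_{m,k}=I_{0+}^{\alpha}I_{0+}^1H_{m,k}=I_{0+}^{1+\alpha}H_{m,k}.
\]

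For the right-sided identity the obstacle is that $e_{m,k}$ is a \emph{left} primitive of $H_{m,k}$, whereas $I_{T-}^{\alpha}$ integrates from the right, so I first rewrite $e_{m,k}$ through a right primitive. Since $I_{T-}^1H_{m,k}(t)=\int_t^T H_{m,k}(u)\,du$, splitting $\int_0^t=\int_0^T-\int_t^T$ yields the reflection identity
\[
e_{m,k}(t)=\int_0^t H_{m,k}(u)\,du=e_{m,k}(T)-I_{T-}^1H_{m,k}(t),
\]
where $e_{m,k}(T)$ is a constant in $t$. Applying $I_{T-}^{\alpha}$, using linearity and the composition rule on the second term, I obtain
\[
I_{T-}^{\alpha}e_{m,k}(t)=e_{m,k}(T)\,I_{T-}^{\alpha}\mathbf{1}(t)-I_{T-}^{1+\alpha}H_{m,k}(t),
\]
where $\mathbf{1}$ denotes the constant function.

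The final point is to replace the constant $\mathbf{1}$ by $\mathbbm{1}_{[0,1]}$ as in the statement, and this is the only genuinely delicate step. For $t\in(0,1)$ the two inputs give the same value of $I_{T-}^{\alpha}$ whenever $T\le 1$, since the integration range $[t,T]$ lies in $[0,1]$; and when $T>1$ they may differ, but then $T\ge (k+1)/2^m$ forces $e_{m,k}(T)=\int_0^1 H_{m,k}(u)\,du=0$, because the positive and negative parts of $H_{m,k}$ (over $J_{m,k}$ and $J_{m,k+0.5}$) have equal mass and cancel. Hence the constant term equals $e_{m,k}(T)\,I_{T-}^{\alpha}\mathbbm{1}_{[0,1]}(t)$ in every case. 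Apart from this bookkeeping about the support of $H_{m,k}$, the argument is a direct consequence of the composition property together with the reflection identity.
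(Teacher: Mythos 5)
Your proof is correct, and on the right-sided identity it takes a genuinely different route from the paper. The left-sided part is the same in both: the composition rule $I_{0+}^{\alpha}I_{0+}^{1}=I_{0+}^{1+\alpha}$ applied to $e_{m,k}=I_{0+}^{1}H_{m,k}$. For the right-sided part, the paper keeps the mixed composition $I_{T-}^{\alpha}I_{0+}^{1}H_{m,k}$ and evaluates it by Fubini, interchanging the order of integration in $\int_t^T\bigl(\int_0^s H_{m,k}(z)\,dz\bigr)(s-t)^{\alpha-1}ds$ to produce the constant term $\frac{e_{m,k}(T)}{\Gamma(1+\alpha)}(T-t)^{\alpha}$ and the term $-I_{T-}^{1+\alpha}H_{m,k}(t)$ in one computation. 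You instead convert the left primitive into a right one via the reflection identity $e_{m,k}=e_{m,k}(T)-I_{T-}^{1}H_{m,k}$ and then use only linearity plus the same-sided semigroup property $I_{T-}^{\alpha}I_{T-}^{1}=I_{T-}^{1+\alpha}$; this avoids the Fubini interchange entirely and makes the origin of the constant term transparent. A further point in your favor: your final bookkeeping about $\mathbbm{1}_{[0,1]}$ versus the constant function $\mathbf{1}$ is more careful than the paper's, which simply asserts $\frac{1}{\Gamma(1+\alpha)}(T-t)^{\alpha}=I_{T-}^{\alpha}\mathbbm{1}_{[0,1]}(t)$ — an identity that holds only when $T\le 1$; for $T>1$ the lemma survives precisely because $e_{m,k}(T)=0$, which is exactly the case distinction you spell out.
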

\begin{proof}
It follows from \cite[formula (2.65)]{skm} that
$I_{0+}^{\alpha}e_{m,k}=I_{0+}^{\alpha}I_{0+}^{1}e_{m,k}=I_{0+}^{1+\alpha}H_{m,k}.$ Consider $I_{T-}^{\alpha}e_{m,k}=I_{T-}^{\alpha}I_{0+}^{1}H_{m,k}.$ It equals
\begin{align*}
    &\frac{1}{\Gamma(\alpha)}\int_{t}^T\left(\int_{0}^s H_{m,k}(z)dz\right)(s-t)^{\alpha-1}ds =
    \frac{1}{\Gamma(\alpha)}\int_{0}^T H_{m,k}(z)\left(\int_{z\vee t}^T (s-t)^{\alpha-1}ds\right)dz\\
    &=\frac{1}{\Gamma(1+\alpha)}\int_{0}^T H_{m,k}(z)\left((T-t)^\alpha- (z-t)_+^{\alpha}ds\right)dz=\frac{e_{m,k}(T)}{\Gamma(1+\alpha)}(T-t)^\alpha- I_{T-}^{1+\alpha}H_{m,k}(t).
\end{align*}
Finally, we note that $\frac{1}{\Gamma(1+\alpha)}(T-t)^\alpha=I_{T-}^{\alpha}\mathbbm{1}_{[0,1]}(t)$
\end{proof}
\begin{proposition}
\label{propemk}
Let $\alpha\in (0,1),$ $T>0,$ $k,m\in \N_0$ and $0\leq k<2^m.$ Then for $t\in (0,1)$ we have
\begin{equation}
  \label{D0emk} D_{0+}^{\alpha}e_{m,k}(t)=\frac{2^{m(\alpha-\frac12)}}{\Gamma(2-\alpha)}\left((2^mt-k)^{1-\alpha}_+-2(2^mt-k-0.5)^{1-\alpha}_+ +(2^mt-k-1)^{1-\alpha}_+\right),
\end{equation}
 and
\begin{align}
  \label{DTemk}
&D_{T-}^{\alpha}e_{m,k}(t)={e_{m,k}(T)}D_{T-}^{\alpha}\mathbbm{1}_{[0,1]}(t)-I_{T-}^{1-\alpha}H_{m,k}(t), t\in(0,T).
\end{align}
%for $t\in(0,T).$
\end{proposition}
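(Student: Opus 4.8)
The plan is to reduce both formulas to the integral identities already established in Lemmas \ref{haarintegr} and \ref{lemmaIe}, using the defining relations $D_{0+}^{\alpha}=\frac{d}{dt}I_{0+}^{1-\alpha}$ and $D_{T-}^{\alpha}=-\frac{d}{dt}I_{T-}^{1-\alpha}$ together with the composition (semigroup) rule for Riemann–Liouville integrals from \cite[formula (2.65)]{skm}. Since $\alpha\in(0,1)$, all exponents that appear, namely $1-\alpha\in(0,1)$ and $2-\alpha\in(1,2)$, lie in the ranges where these tools apply.

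For the left-sided derivative \eqref{D0emk}, I would first apply Lemma \ref{lemmaIe} with $\alpha$ replaced by $1-\alpha$ to write $I_{0+}^{1-\alpha}e_{m,k}=I_{0+}^{2-\alpha}H_{m,k}$. Because $2-\alpha>1$, I factor $I_{0+}^{2-\alpha}=I_{0+}^{1}I_{0+}^{1-\alpha}$ and differentiate, using $\frac{d}{dt}I_{0+}^{1}g=g$, to obtain $D_{0+}^{\alpha}e_{m,k}=I_{0+}^{1-\alpha}H_{m,k}$. Substituting the explicit expression from Lemma \ref{haarintegr} with exponent $1-\alpha$ in place of $\alpha$, I then rescale each power function via $\left(t-\frac{k}{2^m}\right)^{1-\alpha}_{+}=2^{-m(1-\alpha)}(2^mt-k)^{1-\alpha}_{+}$; combining the prefactor $2^{m/2}\cdot 2^{-m(1-\alpha)}=2^{m(\alpha-1/2)}$ yields exactly \eqref{D0emk}.

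For the right-sided derivative \eqref{DTemk} the argument is parallel. Applying Lemma \ref{lemmaIe} with $1-\alpha$ gives $I_{T-}^{1-\alpha}e_{m,k}=e_{m,k}(T)I_{T-}^{1-\alpha}\mathbbm{1}_{[0,1]}-I_{T-}^{2-\alpha}H_{m,k}$, and I differentiate and negate term by term. The first term produces $e_{m,k}(T)\,D_{T-}^{\alpha}\mathbbm{1}_{[0,1]}$ directly from the definition of $D_{T-}^{\alpha}$. For the second, the right-sided composition $I_{T-}^{2-\alpha}=I_{T-}^{1}I_{T-}^{1-\alpha}$ together with $\frac{d}{dt}I_{T-}^{1}g=-g$ (the minus sign arising from the variable lower limit) gives $\frac{d}{dt}I_{T-}^{2-\alpha}H_{m,k}=-I_{T-}^{1-\alpha}H_{m,k}$; combining this with the two remaining minus signs (from the definition of $D_{T-}^{\alpha}$ and from the coefficient $-I_{T-}^{2-\alpha}H_{m,k}$) leaves the contribution $-I_{T-}^{1-\alpha}H_{m,k}$, which is precisely \eqref{DTemk}.

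The only delicate point is justifying the term-by-term differentiation of the fractional integrals, i.e. that the boundary contributions at $u=t$ vanish when the derivative is moved inside. This is guaranteed precisely because $2-\alpha>1$ makes the kernel $(t-u)^{1-\alpha}$ (respectively $(u-t)^{1-\alpha}$) vanish at the endpoint, so differentiation under the integral sign is licit; equivalently, this vanishing is exactly what the semigroup factorisation $I^{2-\alpha}=I^{1}I^{1-\alpha}$ encodes, so no separate estimate is needed beyond invoking \cite[formula (2.65)]{skm}.
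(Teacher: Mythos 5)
Your proposal is correct and follows essentially the same route as the paper: both reduce the problem to $I_{0+}^{1-\alpha}H_{m,k}$ and $I_{T-}^{1-\alpha}H_{m,k}$ via Lemma \ref{lemmaIe} and the semigroup/composition rule of \cite[formula (2.65)]{skm}, then invoke the explicit expressions of Lemma \ref{haarintegr} with exponent $1-\alpha$. The only cosmetic difference is the order of factorisation in the right-sided case (you write $I_{T-}^{2-\alpha}=I_{T-}^{1}I_{T-}^{1-\alpha}$ and use the fundamental theorem of calculus, while the paper writes $I_{T-}^{2-\alpha}=I_{T-}^{1-\alpha}I_{T-}^{1}$ and uses $D_{T-}^{\alpha}I_{T-}^{1}=I_{T-}^{1-\alpha}$), which is immaterial since the operators commute.
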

\begin{proof}
Formula \eqref{D0emk} follows directly from Lemma \ref{lemmaIe} and formula \eqref{Ih0mk}, since $D_{0+}^{\alpha}e_{m,k}=D_{0+}^{\alpha}I_{0+}^{1}H_{m,k}=I_{0+}^{1-\alpha}H_{m,k},$ e.g. \cite[formula 2.65]{skm}.

We obtain the derivative $
D_{T-}^{\alpha}$ from the relation $D_{T-}^{\alpha}e_{m,k}(t)=-\frac{d}{dt}[I_{T-}^{1-\alpha}e_{m,k}](t).$ Thus, we have from Lemma \ref{lemmaIe} and formula \eqref{IhTmk} that
\begin{align*}&D_{T-}^{\alpha}e_{m,k}(t)=-\frac{d}{dt}\left(e_{m,k}(T)I_{T-}^{1-\alpha}\mathbbm{1}_{[0,1]}(t)- I_{T-}^{2-\alpha}H_{m,k}(t)\right)=\frac{e_{m,k}(T)}{\Gamma(1-\alpha)}(T-t)^{-\alpha}\\
&+\frac{d}{dt}I_{T-}^{1-\alpha}I_{T-}^{1}H_{m,k}(t)=\frac{e_{m,k}(T)}{\Gamma(1-\alpha)}(T-t)^{-\alpha}-D_{T-}^{\alpha}I_{T-}^{1}H_{m,k}(t)\\
&=e_{m,k}(T)D_{T-}^{\alpha}\mathbbm{1}_{[0,1]}(t)-I_{T-}^{1-\alpha}H_{m,k}(t).
\end{align*}
\end{proof}
\begin{remark}
We can write the fractional derivatives $D_{0+}^{\alpha}e_{m,k}$ and $D_{T-}^{\alpha}e_{m,k}$ as
\begin{equation}
\label{der:repr}    D_{0+}^{\alpha}e_{m,k}(t)=2^{\frac{m}{2}}\tau^{1-\alpha}_{1,2^m+k}(t), \quad D_{T-}^{\alpha}e_{m,k}(t)=\frac{e_{m,k}(T)}{\Gamma(1-\alpha)}(T-t)^{-\alpha}-2^{\frac{m}{2}}\tau^{1-\alpha}_{2,2^m+k}(t,T).
\end{equation}
\end{remark}

\begin{lemma}
\label{diff:parts}
1) Let a series $\sum_{n=0}^\infty a_n(t),t\in [0,T]$ be uniformly bounded by a non-negative function $A\in L_1[0,T],$ then
\begin{equation}
    I^\alpha_{0+}\left(\sum_{n=0}^\infty a_n \right)(t) = \sum_{n=0}^\infty \left(I^\alpha_{0+}a_n\right)(t), t\in [0,T].
\end{equation}

2) Let $\sum_{n=0}^\infty a_n(t),t\in [0,T]$ be a convergent in $L_1[0,T]$ series with $a_n\in I_+^\alpha(L_1[0,T]),n \geq 0.$ If the exists a summable sequence $b_n\geq 0,n\geq 0$ such that $|\left(D^\alpha_{0+}a_n\right)(t)|\leq b_n$ for all $t\in[0,T]$, then
\begin{equation}
     D^\alpha_{0+}\left(\sum_{n=0}^\infty a_n \right)(t) = \sum_{n=0}^\infty \left(D^\alpha_{0+}a_n\right)(t), t\in [0,T].
\end{equation}
\end{lemma}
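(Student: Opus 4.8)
The plan is to prove the two statements separately, deriving part~2 from part~1 together with the duality between fractional integration and differentiation. For part~1 I would fix $t$ and pass the limit through the integral defining $I^\alpha_{0+}$ by dominated convergence. Writing $S_N := \sum_{n=0}^N a_n$ and $S := \sum_{n=0}^\infty a_n$, linearity of the finite sum gives $[I^\alpha_{0+}S_N](t) = \sum_{n=0}^N [I^\alpha_{0+}a_n](t)$, so it suffices to show $[I^\alpha_{0+}S_N](t) \to [I^\alpha_{0+}S](t)$. Since $S_N(u) \to S(u)$ and $|S_N(u)| \le A(u)$ uniformly in $N$, the integrands $(t-u)^{\alpha-1}S_N(u)$ are dominated in absolute value by $(t-u)^{\alpha-1}A(u)$.

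The one point that needs care is that this dominating function is integrable on $[0,t]$: because the kernel $(t-u)^{\alpha-1}$ is singular at $u=t$ when $\alpha<1$, integrability is not automatic from $A\in L_1$. I would settle it by Tonelli's theorem, computing $\int_0^T \int_0^t (t-u)^{\alpha-1}A(u)\,du\,dt = \int_0^T A(u)\tfrac{(T-u)^\alpha}{\alpha}\,du \le \tfrac{T^\alpha}{\alpha}\,\|A\|_{L_1}<\infty$, so that $I^\alpha_{0+}A\in L_1([0,T])$ and in particular $\int_0^t (t-u)^{\alpha-1}A(u)\,du<\infty$ for a.e.\ $t$. For every such $t$ the dominated convergence theorem applies and yields the claimed interchange.

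For part~2 I would reduce to part~1 using that $D^\alpha_{0+}$ is the left inverse of $I^\alpha_{0+}$. Since $a_n\in I^\alpha_+(L_1([0,T]))$, write $a_n = I^\alpha_{0+}\varphi_n$ with $\varphi_n\in L_1([0,T])$; by the inversion property $\varphi_n = D^\alpha_{0+}a_n$, so the hypothesis reads $|\varphi_n(t)|\le b_n$. Set $\varphi := \sum_{n=0}^\infty \varphi_n = \sum_{n=0}^\infty D^\alpha_{0+}a_n$; its partial sums are bounded by the constant $\sum_{n\ge 0}b_n$, which is finite and lies in $L_1([0,T])$ since the interval is bounded, so $\varphi$ is a well-defined bounded (hence $L_1$) function. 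Applying part~1 with the constant dominating function $A\equiv\sum_{n\ge 0}b_n$ to the series $\sum_n\varphi_n$ gives $I^\alpha_{0+}\varphi = \sum_{n=0}^\infty I^\alpha_{0+}\varphi_n = \sum_{n=0}^\infty a_n$. Finally I would apply $D^\alpha_{0+}$ to both sides and use $D^\alpha_{0+}I^\alpha_{0+}\varphi = \varphi$ to conclude $D^\alpha_{0+}\bigl(\sum_n a_n\bigr) = \varphi = \sum_n D^\alpha_{0+}a_n$.

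The main obstacle is the integrability check in part~1 (the product of an $L_1$ function with the singular kernel), which is exactly what makes dominated convergence legitimate; once that is in place, part~2 is essentially formal, the only substantive inputs being the inversion identity $D^\alpha_{0+}I^\alpha_{0+}=\mathrm{Id}$ on $L_1([0,T])$ (see \cite{skm}) and the observation that the summable bound $b_n$ lets one invoke part~1 with a constant dominating function.
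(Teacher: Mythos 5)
Your proof is correct, and while part 1 matches the paper, your part 2 takes a genuinely different route. For part 1 both you and the paper use dominated convergence against the dominant $(t-u)^{\alpha-1}A(u)$; you verify its integrability directly via Tonelli, where the paper cites \cite{skm} for $\|I^{\alpha}_{0+}A\|_{L_1}<\infty$ — interchangeable. For part 2 the paper writes $D^{\alpha}_{0+}a_n=\frac{d}{dt}I^{1-\alpha}_{0+}a_n$, applies part 1 to interchange $I^{1-\alpha}_{0+}$ with the sum, and then invokes classical term-by-term differentiation of a series whose differentiated series converges uniformly (Weierstrass M-test from the bound $b_n$). You instead work on the kernel side: writing $a_n=I^{\alpha}_{0+}\varphi_n$ with $\varphi_n=D^{\alpha}_{0+}a_n$, you sum the $\varphi_n$ (uniformly dominated by the constant $\sum_n b_n$), apply part 1 to that series, and finish with the inversion identity $D^{\alpha}_{0+}I^{\alpha}_{0+}=\mathrm{Id}$ on $L_1$. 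Your route avoids any pointwise differentiation argument and shows as a byproduct that $\sum_n a_n\in I^{\alpha}_+(L_1[0,T])$, so the left-hand derivative genuinely exists — a point the paper leaves implicit; it is arguably more robust, since it sidesteps questions about where each $I^{1-\alpha}_{0+}a_n$ is actually differentiable. The price is reliance on the inversion theorem from \cite{skm} and one step you pass over quickly: the $L_1$-sum $\sum_n a_n$ of the hypothesis must be identified with the pointwise sum $I^{\alpha}_{0+}\varphi$ your argument produces (immediate, since $L_1$ convergence yields a.e.\ convergence along a subsequence), so the concluding identity holds for a.e.\ $t$ — the same caveat that is tacit in the paper's proof.
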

\begin{proof}
1) The first statement follows from the Lebesgue  dominated convergence theorem, that is
$$ \int_0^t(t-z)^{\alpha-1}\left(\sum_{n=0}^\infty |a_n(z)|\right)dz \leq \int_0^t(t-z)^{\alpha-1}A(z)dz=\Gamma(\alpha)\left(I_{0+}^\alpha A\right) (t),$$
where $I_{0+}^\alpha A(t)$ is finite for almost all $t\in (0,T)$ due to $\|I^\alpha_{0+} A\|_{L_1[0,T]}<\infty,$ e.g \cite[Theorem 2.6]{skm}.

2) Note that  $\left(D_{0+}^\alpha a_n\right)(t)=\frac{d}{dt}\left(I^{1-\alpha}_{0+} a_n\right)(t)$. Since $\sum_{n=0}^\infty a_n\in L_1[0,T],$ we have from the first part that $I^{1-\alpha}_{0+}\left(\sum_{n=0}^\infty a_n  \right)(t)=\sum_{n=0}^\infty \left(I^{1-\alpha}_{0+} a_n  \right)(t).$  Then
\begin{align*}
     &D^\alpha_{0+}\left(\sum_{n=0}^\infty a_n \right)(t)=
     \frac{d}{dt} \left(I^{1-\alpha}_{0+}\left(\sum_{n=0}^\infty a_n  \right)\right)(t)= \frac{d}{dt} \left(\sum_{n=0}^\infty \left(I^{1-\alpha}_{0+} a_n \right)\right)(t).
\end{align*}
Since $\sum_{n=0}^\infty\left|\frac{d}{dt}\left(I^{1-\alpha}_{0+} a_n\right)(t)\right|\leq \sum_{n=0}^\infty b_n<\infty,$  we have
\begin{align*}
\frac{d}{dt} \left(\sum_{n=0}^\infty \left(I^{1-\alpha}_{0+} a_n \right)\right)(t)=\sum_{n=0}^\infty \frac{d}{dt} \left(I^{1-\alpha}_{0+} a_n \right)(t) =\sum_{n=0}^\infty \left(D^\alpha_{0+}a_n\right)(t), t\in [0,T].
\end{align*}
\end{proof}

Consider the partial sums of the fractional derivatives of the Faber-Schauder functions $D_{0+}^{\alpha}\left[\sum_{k=0}^{2^m-1}e_{m,k}\right]\left(t\right)$ and $D_{T-}^{\alpha}\left[\sum_{k=0}^{2^m-1}e_{m,k}\right]\left(t\right).$
Due to \eqref{DTemk} and \eqref{der:repr},  we have $$D_{0+}^{\alpha}\left[\sum_{k=0}^{2^m-1}e_{m,k}\right]\left(t\right)= 2^{\frac{m}{2}}\sum_{k=0}^{\left\lfloor{2^m t}\right\rfloor-1} \tau^{1-\alpha}_{1,2^m+k}(t)+2^{\frac{m}{2}}\tau^{1-\alpha}_{1,2^m+\left\lfloor{2^m t}\right\rfloor}(t)$$
and
$$D_{T-}^{\alpha}\left[\sum_{k=0}^{2^m-1}e_{m,k}\right]\left(t\right)-\left(\sum_{k=0}^{2^m-1}e_{m,k}(T)\right)D_{T-}^{\alpha}\mathbbm{1}_{[0,1]}\left(t\right)=- 2^{\frac{m}{2}}\sum_{k=\left\lfloor{2^m t}\right\rfloor}^{\left\lfloor{2^m T}\right\rfloor}\tau^{1-\alpha}_{2,2^m+k}(t,T).$$
\begin{proposition}
\label{uniformb} If $m\geq 1,$ then
\begin{align}
    \label{ineqD0emk1} \left|\sum_{k=0}^{2^m-1}\tau^{1-\alpha}_{1,2^m+k}(t)\right|&\leq c_1(\alpha) 2^{m(\alpha-1)},   \text{ uniformly on }[0,1], \\
    \label{ineqDTemk1} \left|\sum_{k=0}^{2^m-1}\tau^{1-\alpha}_{2,2^m+k}(t,T)\right|&\leq   c_1(\alpha)  2^{m(\alpha-1)},  \text{ uniformly on }[0,T],
\end{align}
where $c_1(\alpha)=\frac{2^\alpha}{\Gamma(2-\alpha)}\left(\sum_{k\geq 1} \left|C_{\frac{1-\alpha}{2}}\left(k \right)\right|+2 \right).$
\end{proposition}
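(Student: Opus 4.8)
The plan is to fix $t$ and reduce each of the two sums to a single convergent series of fractional-Gaussian-noise covariances plus a handful of easily controlled boundary terms. Write $N=\left\lfloor 2^m t\right\rfloor$ and, for the second sum, $L=\left\lfloor 2^m T\right\rfloor$. Using the partial-sum identities recorded just before the statement, the summand $\tau^{1-\alpha}_{1,2^m+k}(t)$ vanishes for $k\geq N+1$, so the first sum reduces to the full-support terms $k=0,\dots,N-1$ together with the single boundary term $k=N$; likewise $\tau^{1-\alpha}_{2,2^m+k}(t,T)$ vanishes outside $N\leq k\leq L$, leaving the full-support terms $N+1\leq k\leq L-1$ and the two boundary terms $k=N$ and $k=L$.

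First I would dispose of the boundary terms by the crude estimate of Remark~\ref{taub1}, which gives $|\tau^{1-\alpha}_{1,2^m+k}(t)|\leq 2^{m(\alpha-1)}/\Gamma(2-\alpha)$ for every $k$, and the same bound for $\tau^{1-\alpha}_{2,2^m+k}(t,T)$. Since $2^{-\alpha}\leq 1$, each boundary term is at most $\frac{2^\alpha}{\Gamma(2-\alpha)}2^{m(\alpha-1)}$; the first sum has one such term and the second has two, which is exactly the source of the additive constant inside $c_1(\alpha)$.

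For the full-support terms I would pass to the covariance representation of Remark~\ref{fgn}. With $H=\frac{1-\alpha}{2}\in(0,\tfrac12)$ one has $\tau^{1-\alpha}_{1,2^m+k}(t)=\frac{2^{\alpha}}{2^{m(1-\alpha)}\Gamma(2-\alpha)}C_{\frac{1-\alpha}{2}}(2^{m+1}t-2k-1)$ on the full-support range, and analogously for the second sum. Applying the triangle inequality and then the monotonicity estimate of Remark~\ref{taub2}, each factor $|C_{\frac{1-\alpha}{2}}(\cdot)|$ is bounded by $|C_{\frac{1-\alpha}{2}}|$ evaluated at the integer $2N-2k-1$ (respectively $2k-2N-1$). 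As $k$ runs through the full-support range these integers are precisely the distinct positive odd numbers $1,3,5,\dots$, so the sum of absolute values is dominated by $\sum_{j\geq 1}|C_{\frac{1-\alpha}{2}}(j)|$. This series converges because, by Remark~\ref{fgn}, $C_H$ with $H<\tfrac12$ is absolutely integrable and $|C_H|$ is decreasing on $[1,\infty)$, whence $\sum_{j\geq 1}|C_H(j)|\leq |C_H(1)|+\int_1^\infty|C_H(x)|\,dx<\infty$.

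Collecting the full-support contribution $\frac{2^\alpha}{\Gamma(2-\alpha)}2^{m(\alpha-1)}\sum_{j\geq1}|C_{\frac{1-\alpha}{2}}(j)|$ with the boundary contributions and factoring out $2^{m(\alpha-1)}$ reproduces the claimed constant $c_1(\alpha)=\frac{2^\alpha}{\Gamma(2-\alpha)}\bigl(\sum_{k\geq1}|C_{\frac{1-\alpha}{2}}(k)|+2\bigr)$ for both sums simultaneously (the value $2$ covers the two boundary terms of the second sum, and a fortiori the single one of the first). The hypothesis $m\geq1$ only ensures that the full-support range is handled exactly as in Remark~\ref{taub2}. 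The main obstacle is the bookkeeping of this decomposition: one must verify that at most two boundary terms occur, that the full-support arguments inject into the positive odd integers so the estimate collapses to a single $m$- and $t$-independent series, and that the slack between the sharp constant available for the first sum and the uniform $+2$ is harmless.
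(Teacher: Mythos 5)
Your proof is correct and follows essentially the same route as the paper's: the same decomposition into full-support terms (controlled via the covariance representation of Remark \ref{fgn}, the monotonicity bound of Remark \ref{taub2}, and the injection of the arguments into the positive odd integers, yielding the $m$- and $t$-independent series $\sum_{j\geq 1}|C_{\frac{1-\alpha}{2}}(j)|$) plus at most one or two boundary terms controlled by the crude bound of Remark \ref{taub1}. The only cosmetic differences are that the paper treats the degenerate case $\lfloor 2^m t\rfloor+1\geq\lfloor 2^m T\rfloor$ for $\tau_2$ as a separate case and keeps the sharper intermediate constants $2^{-\alpha}$, $2^{1-\alpha}$ before relaxing them to $2$, whereas you absorb both into the uniform boundary-term estimate from the start.
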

\begin{proof}
From Remarks \ref{taub1} and \ref{taub2}, it follows that
\begin{align*}
&\left|\sum_{k=0}^{2^m-1}\tau^{1-\alpha}_{1,2^m+k}(t)\right|\leq \frac{2^{m(\alpha-1)+\alpha}}{\Gamma(2-\alpha)}\sum_{k=0}^{\left\lfloor{2^m t}\right\rfloor-1} \left|C_{\frac{1-\alpha}{2}}\left(2 \left\lfloor 2^m t\right\rfloor-2k-1 \right)\right|+\frac{2^{m(\alpha-1)}}{\Gamma(2-\alpha)}\\
&\leq \frac{2^{m(\alpha-1)+\alpha}}{\Gamma(2-\alpha)}\left(\sum_{k\geq 1} \left|C_{\frac{1-\alpha}{2}}\left(k \right)\right|+2^{-\alpha} \right)\leq c_1(\alpha)  2^{m(\alpha-1)},
\end{align*}
where the series $\sum_{k\geq 1}\left|C_{\frac{1-\alpha}{2}}\left(k \right)\right|$ converges due to integrability of $C_{\frac{1-\alpha}{2}},$ e.g. \cite[Section 3.2]{embr}.

Consider the case of $D_{T-}^{\alpha}.$
Let $\left\lfloor{2^m t}\right\rfloor+1\leq \left\lfloor{2^m T}\right\rfloor-1,$ then
it follows from Remarks \ref{taub1} and \ref{taub2} that
\begin{align*}
&\left|\sum_{k=0}^{2^m-1}\tau^{1-\alpha}_{2,2^m+k}(t,T)\right|\leq
\sum_{k=\left\lfloor{2^m t}\right\rfloor+1}^{\left\lfloor{2^m T}\right\rfloor-1} |\tau^{1-\alpha}_{2,2^m+k}(t,T)|+2\frac{2^{m(\alpha-1)}}{\Gamma(2-\alpha)}\\
&\leq
\frac{2^{m(\alpha-1)+\alpha}}{\Gamma(2-\alpha)} \sum_{k=\left\lfloor{2^m t}\right\rfloor+1}^{\left\lfloor{2^m T}\right\rfloor-1} \left|C_{\frac{1-\alpha}{2}}\left(2k-2 \left\lfloor 2^m t\right\rfloor-1 \right)\right|+2\frac{2^{m(\alpha-1)}}{\Gamma(2-\alpha)}\\
&\leq \frac{2^{m(\alpha-1)+\alpha}}{\Gamma(2-\alpha)}\left(\sum_{k\geq 1} \left|C_{\frac{1-\alpha}{2}}\left(k \right)\right|+2^{1-\alpha} \right).
\end{align*}
Let $\left\lfloor{2^m t}\right\rfloor+1 \geq \left\lfloor{2^m T}\right\rfloor.$ Then there are at most two non-zero $\tau^{1-\alpha}_{2,2^m+k}(t,T)$. Thus, we get the upper bound
$\left|\sum_{k=0}^{2^m-1}\tau^{1-\alpha}_{2,2^m+k}(t,T)\right|\leq
2\frac{2^{m(\alpha-1)}}{\Gamma(2-\alpha)}.$
\end{proof}

It follows from Proposition \ref{uniformb} that the series $\sum_{m\geq 0}2^{m\left(\frac{1}{2}-H\right)}\left|D_{0+}^{\alpha}\left[\sum_{k=0}^{2^m-1}e_{m,k}\right]\left(t\right)\right|$ converges uniformly on $[0,1]$ for $\alpha<H.$ This ensures that Lemma \ref{diff:parts} holds for the Takagi-Landsberg function $x^H$ and yields $$D_{0+}^\alpha x^H(t)=\sum_{m= 0}^{\infty}2^{m\left(1-H\right)}\sum_{k=0}^{2^m-1} \tau^{1-\alpha}_{1,2^m+k}(t).$$

Take the expansion $x^H(t)-x^H(T)=\sum_{m\geq 0}2^{m\left(\frac{1}{2}-H\right)}\sum_{k=0}^{2^m-1}(e_{m,k}(t)-e_{m,k}(T)).$ Since
the series $\sum_{m\geq 0}2^{m\left(\frac{1}{2}-H\right)}\left|D_{T-}^{\alpha}\left[\sum_{k=0}^{2^m-1}(e_{m,k}-e_{m,k}(T)\right]\left(t\right)\right|$ converges uniformly on $[0,T]$ for $\alpha<H,$ then it holds
by  Lemma \ref{diff:parts} that
$$D_{T-}^\alpha x^H(t)= x^H(T) D_{T-}^\alpha\mathbbm{1}_{[0,1]}(t) -\sum_{m= 0}^{\infty}2^{m\left(1-H\right)}\sum_{k=0}^{2^m-1} \tau^{1-\alpha}_{2,2^m+k}(t,T).$$

Now consider the special case $\alpha=H$ and the values of $D_{0+}^\alpha x^H(t)$ at points of the $m_0$th dyadic partition of $[0,1],$ that is the set $\mathbb{T}_{m_0}:=\{k 2^{-m_0}|k=0,\ldots, 2^{m_0}\}.$
\begin{proposition}
Let $k_0,m_0 \in \N_0$ and $k_0\leq 2^{m_0}-1.$ Then
$$\sum_{m=0}^\infty 2^{m\left(\frac{1}{2}-H\right)}\sum_{k= 0}^{2^m-1}D_{0+}^{H}e_{m,k}\left(\frac{k_0}{2^{m_0}}\right)=-\infty.$$
\end{proposition}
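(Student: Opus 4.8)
The plan is to reduce the double series to a single series in $m$ whose general term I can evaluate explicitly in terms of the fractional Gaussian noise covariance $C_{(1-H)/2}$, and then to show that this general term converges to a \emph{strictly negative} constant, so that the whole series cannot converge. First I would apply \eqref{der:repr} with $\alpha=H$, which gives $D_{0+}^{H}e_{m,k}(t)=2^{m/2}\tau^{1-H}_{1,2^m+k}(t)$, so that the $m$-th block of the series becomes
\[
a_m(t):=2^{m\left(\frac12-H\right)}\sum_{k=0}^{2^m-1}D_{0+}^{H}e_{m,k}(t)=2^{m(1-H)}\sum_{k=0}^{2^m-1}\tau^{1-H}_{1,2^m+k}(t).
\]
I then fix $t=k_0 2^{-m_0}$ (the case $k_0=0$, i.e.\ $t=0$, is degenerate since every $\tau^{1-H}_{1,2^m+k}(0)=0$ by \eqref{taudef}, so I take $k_0\geq 1$), and I split the series as $\sum_{m<m_0}a_m(t)+\sum_{m\geq m_0}a_m(t)$; by Proposition~\ref{uniformb} each $a_m(t)$ is bounded, so the first, finite, sum is irrelevant to the divergence.

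The crucial step is the evaluation of $a_m(t)$ for $m\geq m_0$. Here $2^m t=k_0 2^{m-m_0}=:N_m\in\N$, so by \eqref{taudef} the terms with $k\geq N_m$ vanish identically — including the boundary term $k=N_m$, since $t=N_m 2^{-m}$ kills all three summands — while for $k\leq N_m-1$ one has $t\geq (k+1)2^{-m}$ and Remark~\ref{fgn} (with order $1-H$) applies, giving $2^{m(1-H)}\tau^{1-H}_{1,2^m+k}(t)=\frac{2^{H}}{\Gamma(2-H)}C_{(1-H)/2}\!\left(2N_m-2k-1\right)$. Re-indexing by $j=N_m-k$ turns the arguments $2N_m-2k-1$ into the odd integers $1,3,\dots,2N_m-1$, so
\[
a_m\!\left(k_0 2^{-m_0}\right)=\frac{2^{H}}{\Gamma(2-H)}\sum_{j=1}^{N_m}C_{(1-H)/2}(2j-1).
\]

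Finally I would invoke the properties of $C_{(1-H)/2}$ recorded in Remarks~\ref{fgn} and~\ref{taub2}: since $H\in(0,1)$ we have $(1-H)/2\in(0,1/2)$, so $C_{(1-H)/2}(x)<0$ for $x\geq 1$ and $C_{(1-H)/2}$ is absolutely summable over the integers, whence the subseries $\sum_{j\geq1}C_{(1-H)/2}(2j-1)$ converges. As $m\to\infty$ we have $N_m\to\infty$, so
$a_m(k_0 2^{-m_0})\to \frac{2^{H}}{\Gamma(2-H)}\sum_{j=1}^{\infty}C_{(1-H)/2}(2j-1)=:-L$,
and because this limit is a convergent sum of \emph{strictly} negative terms, $L>0$. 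Thus the general term of the series tends to the strictly negative constant $-L$, so $a_m(k_0 2^{-m_0})\leq -L/2$ for all large $m$, and the tail $\sum_{m\geq m_0}a_m(k_0 2^{-m_0})$ diverges to $-\infty$; adding the finite contribution from $m<m_0$ yields the claim. The main obstacle is precisely this last point — ensuring the limit is \emph{strictly} negative rather than merely $\leq 0$ — which rests on the combination of strict negativity of $C_{(1-H)/2}$ at each odd integer with its summability (so the limit both exists and is a genuine sum of negative numbers); the vanishing of the boundary term and the re-indexing are routine.
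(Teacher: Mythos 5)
Your proof is correct, and its core coincides with the paper's: both fix $t=k_0 2^{-m_0}$, use \eqref{der:repr} and Remark \ref{fgn} to write the $m$-th block ($m\geq m_0$) as $\frac{2^{H}}{\Gamma(2-H)}\sum_{k=0}^{N_m-1}C_{\frac{1-H}{2}}\bigl(2N_m-2k-1\bigr)$ with $N_m=2^{m-m_0}k_0$, and both exploit the strict negativity of $C_{\frac{1-H}{2}}$ on $[1,\infty)$. Where you part ways is the finishing step. The paper never re-indexes: it shows directly that $d_{m+1}-d_m$ is itself a sum of values $C_{\frac{1-H}{2}}(\text{odd integer})<0$, so the blocks $d_m$ are strictly decreasing and hence bounded above by $d_{m_0}<0$, which forces divergence; no summability of $C_{\frac{1-H}{2}}$ is invoked. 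Your re-indexing $j=N_m-k$ reveals the cleaner structure — the blocks are precisely the partial sums of the fixed series $\sum_{j\geq 1}C_{\frac{1-H}{2}}(2j-1)$ — and you then use absolute summability (already established in Proposition \ref{uniformb}) to conclude the blocks converge to a strictly negative limit $-L$. Both routes buy the same thing (the general term stays bounded away from $0$ from below); yours is arguably more transparent, but note that the summability step is not actually needed: since every term of your $j$-series is strictly negative, the partial sums satisfy $a_m\leq \frac{2^{H}}{\Gamma(2-H)}C_{\frac{1-H}{2}}(1)<0$ for all $m\geq m_0$ outright, which is the paper's monotonicity observation in disguise and shortens your argument. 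A minor point in your favor: you flag explicitly that $k_0=0$ must be excluded (there the sum is $0$, not $-\infty$, by \eqref{taudef}), a degeneracy the paper's statement and proof silently pass over.
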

\begin{proof}
%Denote by $\tilde{d}^{-}_m(t)=2^{m(0.5-H)}d^{\alpha-}_m(t).$
In the case $\alpha=H,$ $m\geq m_0,$ it follows from Remark \ref{fgn} that
\begin{align}
    \label{prop2:eq1}&d_m:=2^{m\left(\frac{1}{2}-H\right)}\sum_{k= 0}^{2^m-1}D_{0+}^{H}e_{m,k}\left(\frac{k_0}{2^{m_0}}\right)=\frac{2^{H}}{\Gamma(2-H)}\sum_{k=0}^{2^{m-m_0}k_0-1} C_{\frac{1-H}{2}}\left(\frac{2^{m+1}k_0}{2^{m_0}}-2k-1 \right).
\end{align}
For all $k\leq 2^{m-m_0}k_0-1$ we have $2^{m-m_0+1}k_0-2k-1\geq 1$ and
\begin{equation}
\label{negativC}    C_{\frac{1-H}{2}}\left(2^{m-m_0+1}k_0-2k-1 \right)< 0,
\end{equation} which gives that the right hand side of \eqref{prop2:eq1} is negative.

Now we show that the sequence $d_m$ is monotonically decreasing if $m\geq m_0.$
Consider the difference $d_{m+1}-d_m$, which equals
\begin{align*}
    &\frac{2^{H}}{\Gamma(2-H)}\left[\sum_{k=0}^{\frac{2^{m+1}k_0}{2^{m_0}}-1} C_{\frac{1-H}{2}}\left(\frac{2^{m+1+1}k_0}{2^{m_0}}-2k-1 \right)  -\sum_{k=0}^{\frac{2^{m}k_0}{2^{m_0}}-1} C_{\frac{1-H}{2}}\left(\frac{2^{m+1}k_0}{2^{m_0}}-2k-1 \right)\right]\\
&=\frac{2^{H}}{\Gamma(2-H)}\sum_{k=0}^{2^{m-m_0}k_0-1} C_{\frac{1-H}{2}}\left(2^{m-m_0+2}k_0-2k-1 \right).
\end{align*}
We get from the last relation and \eqref{negativC} that $d_{m+1}-d_m<0,$
%The value in any bracket is negative, \textbf{very last term-what is it? }  \textbf{?????????????????? but it is not so. I do not understand. }
so $d_{m+1}<d_{m}<d_{m_0}<0$ for all $m>m_0.$ This means that $\sum_{m=m_0}^\infty d_m<\sum_{m=m_0}^\infty d_{m_0}=-\infty.$ %\textbf{It does not mean anything!}

\end{proof}

\section{A weighted Takagi–Landsberg function}
In this section we consider the    extension of the class of the Takagi–Landsberg functions. Namely, for  constants $c_{m,k}\in [-L,L], k,m\in \N_0,$ we  define a {\it weighted Takagi–Landsberg function} as
$y_{c,H}:[0,1]\to \R$ via
\begin{equation}
    \label{ydef}
    y_{c,H}(t)=\sum_{m=0}^\infty 2^{m\left(\frac{1}{2}-H\right)}\sum_{k=0}^{2^m-1}{c_{m,k}}e_{m,k}(t), t\in [0,1].
\end{equation}
Since $|y_{c,H}(t)|\leq L x^H(t),t\in [0,1],$ the series in \eqref{ydef} converges uniformly and $y_{c,H}\in L_1([0,1]).$

%\begin{remark}
%Any $H$-H\"{o}lder continuous function $f\in C[0,1]$ with $f(0)=f(1)=0$ admit representation as weighted Takagi–Landsberg function of order $H.$

%\end{remark}

\begin{lemma}
Let $H>0.$ Any $H$-H\"{o}lder continuous function $f$ on [0,1] can be expanded as
\begin{equation}
    \label{TLr}
    f(t)=f(0)(1-t)+f(1)t+\sum_{m=0}^\infty 2^{m\left(\frac{1}{2}-H\right)}\sum_{k=0}^{2^m-1}{c_{m,k}}e_{m,k}(t), t\in [0,1].
\end{equation}
We call formula \eqref{TLr} the Takagi-Landsberg representation of function $f.$
\end{lemma}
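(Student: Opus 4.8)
The plan is to derive \eqref{TLr} directly from the general Faber--Schauder expansion \eqref{eq:expan}, which is valid for every $f\in C([0,1])$ and converges uniformly because $\{e_{m,k}\}$ is a Schauder basis of $C([0,1])$. First I would rewrite the affine term of \eqref{eq:expan} as
\[
f(0)+(f(1)-f(0))t=f(0)(1-t)+f(1)t,
\]
which already matches the first two summands in \eqref{TLr}. It then remains to recast the double series $\sum_{m}\sum_{k}2^{m/2}a_{m,k}e_{m,k}(t)$ from \eqref{eq:expan} in the weighted form $\sum_m 2^{m(1/2-H)}\sum_k c_{m,k}e_{m,k}(t)$, i.e.\ to set
\[
c_{m,k}:=2^{mH}a_{m,k},\qquad a_{m,k}=2f\!\left(\tfrac{k+0.5}{2^m}\right)-f\!\left(\tfrac{k+1}{2^m}\right)-f\!\left(\tfrac{k}{2^m}\right),
\]
so that $2^{m/2}a_{m,k}=2^{m(1/2-H)}c_{m,k}$ term by term. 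With this choice the identity \eqref{TLr} is immediate, and the only genuine point to verify is that the sequence $(c_{m,k})$ is bounded, i.e.\ that $f$ being $H$-H\"older forces $|a_{m,k}|\le C\,2^{-mH}$.

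The key estimate is obtained by viewing $a_{m,k}$ as a symmetric second difference of $f$ on the interval $[\,k/2^m,(k+1)/2^m\,]$. Writing
\[
a_{m,k}=\Bigl[f\!\left(\tfrac{k+0.5}{2^m}\right)-f\!\left(\tfrac{k}{2^m}\right)\Bigr]-\Bigl[f\!\left(\tfrac{k+1}{2^m}\right)-f\!\left(\tfrac{k+0.5}{2^m}\right)\Bigr]
\]
and applying the $H$-H\"older bound $|f(x)-f(y)|\le K|x-y|^H$ to each of the two first differences, whose increments both equal $2^{-m-1}$, I get
\[
|a_{m,k}|\le 2K\bigl(2^{-m-1}\bigr)^H=K\,2^{1-H}\,2^{-mH}.
\]
Hence $|c_{m,k}|=2^{mH}|a_{m,k}|\le K\,2^{1-H}=:L$ uniformly in $m$ and $k$, so $c_{m,k}\in[-L,L]$; substituting back reproduces \eqref{TLr}, and convergence of the resulting series is inherited from the uniform convergence of \eqref{eq:expan}.

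I expect no serious obstacle here: the argument is essentially the classical characterization of H\"older regularity through the decay of Faber--Schauder coefficients, and the single delicate point is merely to carry the normalization $2^{mH}$ correctly, so that the H\"older estimate becomes exactly the boundedness of $c_{m,k}$. The only case worth a remark is $H>1$, where $H$-H\"older continuity on $[0,1]$ forces $f$ to be constant; then every second difference vanishes, $a_{m,k}=c_{m,k}=0$, and \eqref{TLr} still holds trivially. The converse inclusion, that every bounded-coefficient weighted Takagi--Landsberg function is $H$-H\"older, is a separate statement and is not needed for this lemma.
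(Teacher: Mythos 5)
Your proof is correct, and it departs from the paper's at exactly one point: the justification of the bound $|a_{m,k}|\le C\,2^{-mH}$. The paper does not estimate the second difference by hand; it invokes the characterization theorem of Kashin and Saakyan (Theorem 3, p.~191 of the cited monograph), which says that $f$ is $H$-H\"older continuous \emph{if and only if} $|a_{m,k}|\le C(2^m+k)^{-H}$, and then passes to $c_{m,k}=2^{mH}a_{m,k}$ exactly as you do. Your replacement of that citation by the elementary splitting of $a_{m,k}$ into two first differences of step $2^{-m-1}$, each controlled by the H\"older constant, is a correct and self-contained derivation of the one implication the lemma actually asserts, with the explicit constant $L=K2^{1-H}$; your remark on the degenerate case $H>1$ is also apt, since the lemma is stated for all $H>0$ while the quoted characterization is naturally a statement about $H\le 1$. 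What the paper's route buys in exchange for the external reference is the converse implication in the same stroke: bounded coefficients $c_{m,k}\in[-L,L]$ give $|a_{m,k}|\le L2^{-mH}\le 2L(2^m+k)^{-H}$, hence $H$-H\"older continuity of $y_{c,H}$. That converse is not needed for the lemma as literally stated, as you correctly observe, but it is what underwrites the paper's repeated identification of the class of weighted Takagi--Landsberg functions with the $H$-H\"older functions (asserted in the abstract and used in Sections 5--6, where solutions of integral equations are H\"older continuous and are therefore claimed to possess Takagi--Landsberg representations), so in the paper's economy the two directions are deliberately proved together.
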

\begin{proof}
To show this, we first provide the relation between coefficients $a_{m,k}$ in expansion \eqref{eq:expan} and $c_{m,k}$ in \eqref{ydef} that is
\begin{align}
\label{eq:coef}    c_{m,k}=a_{m,k}2^{mH}=2^{mH}\left[2f\left(\frac{k+0.5}{2^m}\right)-f\left(\frac{k+1}{2^m}\right)-f\left(\frac{k}{2^m}\right)\right].
\end{align}

Theorem 3 on p. 191 in \cite{kashin} states that $f$ is $H$-H\"{o}lder continuous if and only if coefficients $a_{m,k}$ in expansion \eqref{eq:expan} satisfy $|a_{m,k}|\leq C (2^m+k)^{-H},$ $m\geq 0$ for a constant $C>0.$
Thus, if $f$ is $H$-H\"{o}lder continuous, then $|c_{m,k}|=|a_{m,k}|2^{mH}\leq C:=L$ and $f$ is a weighed  Takagi–Landsberg function. If $y_{c,H}$ admits representation \eqref{ydef}, i.e. $c_{m,k}\in[-L,L],$ then
$a_{m,k}=c_{m,k}2^{-mH}$ from \eqref{eq:coef} satisfy
$|a_{m,k}|\leq L 2^{-mH} \leq 2 L(2^m+k)^{-H}, m\geq 0.$ Hence, $y_{c,H}$ is $H$-H\"{o}lder continuous.
\end{proof}

Now let us  establish that $y_{c,H}$ admit fractional derivatives of order $\alpha<H$.
\begin{theorem}
\label{thmWF}
Let $0<\alpha<H$ then
\begin{align}
   &\label{D0y} D_{0+}^{\alpha} y_{c,H}(t) =\sum_{m=0}^\infty 2^{m(1-H)}\sum_{k=0}^{2^m-1}c_{m,k} \tau_{1,2^m+k}^{1-\alpha}(t)\\
&\label{DTy} D_{T-}^{\alpha} [y_{c,H}-y_{c,H}(T)](t)  =-\sum_{m=0}^\infty 2^{m(1-H)}\sum_{k=0}^{2^m-1}c_{m,k} \tau_{2,2^m+k}^{1-\alpha}(t,T).
\end{align}
\end{theorem}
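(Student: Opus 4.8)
The plan is to invoke Lemma~\ref{diff:parts}(2) with the level-$m$ blocks
$$a_m(t):=2^{m(1/2-H)}\sum_{k=0}^{2^m-1}c_{m,k}\,e_{m,k}(t)$$
as the summands, so that $y_{c,H}=\sum_{m\ge 0}a_m$, and then to compute the term-by-term derivative of each $a_m$ via the representation \eqref{der:repr}. Three hypotheses must be verified: that $\sum_m a_m$ converges in $L_1([0,1])$, that each $a_m\in I_+^{\alpha}(L_1([0,1]))$, and that $|D_{0+}^{\alpha}a_m(t)|$ is dominated uniformly in $t$ by a summable sequence. The first follows from $|a_m(t)|\le L\,2^{m(1/2-H)}\sum_k e_{m,k}(t)$ together with the uniform convergence of the Takagi--Landsberg series $x^H$ for $H>0$. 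The second holds because each $a_m$ is a finite linear combination of the $e_{m,k}$, and each $e_{m,k}=I_{0+}^{\alpha}(I_{0+}^{1-\alpha}H_{m,k})$ lies in $I_+^{\alpha}(L_1)$ (its preimage $I_{0+}^{1-\alpha}H_{m,k}=D_{0+}^{\alpha}e_{m,k}$ is bounded by Remark~\ref{taub1}, hence in $L_1$).

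For the crucial domination I would combine $D_{0+}^{\alpha}e_{m,k}=2^{m/2}\tau_{1,2^m+k}^{1-\alpha}$ from \eqref{der:repr} with Proposition~\ref{uniformb}. Applying $D_{0+}^{\alpha}$ to the finite sum $a_m$ gives
$$D_{0+}^{\alpha}a_m(t)=2^{m(1-H)}\sum_{k=0}^{2^m-1}c_{m,k}\,\tau_{1,2^m+k}^{1-\alpha}(t),$$
which is exactly the $m$-th block of \eqref{D0y}. The subtlety is that Proposition~\ref{uniformb} bounds $\bigl|\sum_k\tau_{1,2^m+k}^{1-\alpha}(t)\bigr|$, whereas the arbitrary coefficients force the estimate $\bigl|\sum_k c_{m,k}\tau_{1,2^m+k}^{1-\alpha}(t)\bigr|\le L\sum_k|\tau_{1,2^m+k}^{1-\alpha}(t)|$. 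The point I would stress is that the proof of Proposition~\ref{uniformb} in fact controls the sum of absolute values: at a fixed $t$ the terms with $k\le\lfloor 2^m t\rfloor-1$ all have one sign (negative, by Remark~\ref{taub2}), the single term $k=\lfloor 2^m t\rfloor$ is bounded by Remark~\ref{taub1}, and the remaining terms vanish, so no cancellation is being exploited and $\sum_k|\tau_{1,2^m+k}^{1-\alpha}(t)|\le c_1(\alpha)2^{m(\alpha-1)}$ uniformly in $t$. Hence $|D_{0+}^{\alpha}a_m(t)|\le L\,c_1(\alpha)\,2^{m(\alpha-H)}=:b_m$, and since $\alpha<H$ the sequence $(b_m)$ is summable, so Lemma~\ref{diff:parts}(2) yields \eqref{D0y}.

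For the right-sided derivative \eqref{DTy} I would argue in parallel, applying the right-sided analogue of Lemma~\ref{diff:parts} to the blocks $\tilde a_m(t):=2^{m(1/2-H)}\sum_k c_{m,k}\bigl(e_{m,k}(t)-e_{m,k}(T)\bigr)$, whose sum is $y_{c,H}-y_{c,H}(T)$. The decisive simplification is that subtracting the constant $e_{m,k}(T)$ cancels the singular boundary term: by formula \eqref{DTemk} and the identity $I_{T-}^{1-\alpha}H_{m,k}=2^{m/2}\tau_{2,2^m+k}^{1-\alpha}$ recorded after Lemma~\ref{haarintegr}, one has $D_{T-}^{\alpha}\bigl(e_{m,k}-e_{m,k}(T)\bigr)(t)=-2^{m/2}\tau_{2,2^m+k}^{1-\alpha}(t,T)$, so $D_{T-}^{\alpha}\tilde a_m$ is precisely the $m$-th block of \eqref{DTy}. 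The same sign/absolute-value observation, now applied to the $\tau_2$-estimate in Proposition~\ref{uniformb}, supplies $|D_{T-}^{\alpha}\tilde a_m(t)|\le L\,c_1(\alpha)\,2^{m(\alpha-H)}$, again summable, and the interchange follows as before.

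The main obstacle is therefore not the mechanics of differentiating the series term by term, which Lemma~\ref{diff:parts} packages, but obtaining the uniform summable bound in the presence of arbitrary coefficients $c_{m,k}\in[-L,L]$. This is precisely where one must recognize that the estimate of Proposition~\ref{uniformb} is in truth a bound on $\sum_k|\tau|$ rather than on $|\sum_k\tau|$, so that the factor $L$ can be extracted via the triangle inequality without destroying the geometric decay $2^{m(\alpha-H)}$ that drives summability for $\alpha<H$.
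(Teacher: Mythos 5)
Your proposal is correct and takes essentially the same route as the paper's proof: decompose $y_{c,H}$ into level-$m$ blocks, compute each block's derivative from \eqref{D0emk} and \eqref{DTemk}, dominate it by $L\,c_1(\alpha)\,2^{m(\alpha-H)}$ via Proposition \ref{uniformb}, and interchange sum and derivative by Lemma \ref{diff:parts}. Your explicit remark that the proof of Proposition \ref{uniformb} in fact controls $\sum_k\left|\tau_{1,2^m+k}^{1-\alpha}(t)\right|$ (no cancellation is used, since the terms with $k\leq\lfloor 2^m t\rfloor-1$ share a sign and the rest vanish) is precisely the point the paper leaves implicit when it cites \eqref{ineqD0emk1} to bound the coefficient-weighted sum, so your write-up is, if anything, a more careful version of the same argument.
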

\begin{proof}
Due to \eqref{D0emk}, the fractional derivatives of summands in \eqref{ydef} equal
\begin{align*}
  D_{0+}^\alpha\left[2^{m\left(\frac{1}{2}-H\right)}\sum_{k=0}^{2^m-1}{c_{m,k}}e_{m,k}\right](t)=2^{m(1-H)}\sum_{k=0}^{2^m-1}c_{m,k} \tau_{1,2^m+k}^{1-\alpha}(t) .
\end{align*}
From \eqref{ineqD0emk1} we have the following uniform bound
$$\left|D_{0+}^\alpha\left[2^{m\left(\frac{1}{2}-H\right)}\sum_{k=0}^{2^m-1}{c_{m,k}}e_{m,k}\right](t)\right|\leq L 2^{m(1-H)}\sum_{k=0}^{2^m-1}\left|\tau_{1,2^m+k}^{1-\alpha}(t)\right|\leq L c_1(\alpha) 2^{m(\alpha-H)}.$$
Analogously,
$$\left|D_{T-}^\alpha\left[2^{m\left(\frac{1}{2}-H\right)}\sum_{k=0}^{2^m-1}c_{m,k}(e_{m,k}-e_{m,k}(T))\right](t)\right|\leq L 2^{m(1-H)}\sum_{k=0}^{2^m-1}\left|\tau_{2,2^m+k}^{1-\alpha}(t)\right|\leq L c_1(\alpha) 2^{m(\alpha-H)}.$$
Thus, from Lemma \ref{diff:parts} we get the existence of $D_{0+}^\alpha y_{c,H}$ and $D_{T-}^\alpha y_{c,H}.$ Consequently, the statement of the theorem holds.
\end{proof}

\section{The
Riemann-Stieltjes integral in terms of weighted Takagi-Landsberg functions}
Let $\alpha\in (0,1).$ Denote by $\mathbf{H}^\alpha[0,1]$  the space of $\alpha-$H\"older continuous function on $[0,1].$ In this section we consider the
Riemann-Stieltjes integral of $f\in \mathbf{H}^{H_1}[0,1]$ with respect to $g\in \mathbf{H}^{H_2}[0,1]$ if $H_1+H_2>1,$ which   can be defined  as
\begin{equation*}
    \int_0^t f dg=-\int_{0}^t D_{0+}^\alpha f(s)D_{t-}^{1-\alpha}[g(\cdot) - g (t)](s)ds
\end{equation*}
for any $\alpha\in (0,1)$ such that $\alpha < H_1, 1 -\alpha < H_2,$ see, e.g. \cite{Zahle}.

We use the Takagi-Landsberg representation of functions $f$ and $g$ \eqref{ydef} to give the series expansion of integral $\int_0^t f dg.$
Denote by
\begin{equation}
    \label{D:def}
    \Delta^\alpha_{2^m+k,2^n+l}(t)=\tau^{\alpha}_{1,2^m+k}\left(t\wedge\frac{l}{2^n}\right)-2\tau^{\alpha}_{1,2^m+k}\left(t\wedge \frac{l+0.5}{2^n}\right)+\tau^{\alpha}_{1,2^m+k}\left(t\wedge \frac{l+1}{2^n}\right) \, t\in[0,1]
\end{equation}
for $\alpha>0,$ $n,m\in\N_0,$ $l=0,\ldots,2^n-1,k=0,\ldots 2^m-1.$
\begin{theorem}
\label{DI} Let $f\in \mathbf{H}^{H_1}[0,1]$ and $g\in \mathbf{H}^{H_2}[0,1]$ with $H_1+H_2>1$ possess  the following Takagi-Landsberg representations
\begin{align*}
    f(t)&=\sum_{m=0}^\infty 2^{m\left(\frac{1}{2}-H_1\right)}\sum_{k=0}^{2^m-1}{c^{(1)}_{m,k}}e_{m,k}(t), t\in [0,1],\\
    g(t)&=\sum_{m=0}^\infty 2^{m\left(\frac{1}{2}-H_2\right)}\sum_{k=0}^{2^m-1}{c^{(2)}_{m,k}}e_{m,k}(t), t\in [0,1],
\end{align*}
where $|c^{(1)}_{m,k}|,|c^{(2)}_{m,k}|\leq L$ for some $L>0.$
If $1-H_2<\alpha<H_1,$ then
\begin{align}
\nonumber    &\int_0^t f(s)dg(s)=-\int_{0}^t D_{0+}^\alpha f(s)D_{t-}^{1-\alpha}[g(\cdot) - g (t)](s)ds\\
\label{DI:eq0}    &=-\sum_{n=0}^\infty \sum_{m=0}^\infty \sum_{k=0}^{2^m-1} \sum_{l=0}^{2^n-1}2^{m(1-H_1)+n(1-H_2)}c^{(1)}_{m,k} c^{(2)}_{n,l}\Delta^2_{2^m+k,2^n+l}(t).
\end{align}
\end{theorem}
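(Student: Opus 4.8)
The plan is to substitute the two series representations furnished by Theorem \ref{thmWF} into the Z\"ahle formula and then reduce the whole computation to a single scalar integral. For the left derivative, \eqref{D0y} gives $D_{0+}^\alpha f(s)=\sum_{m}2^{m(1-H_1)}\sum_{k}c^{(1)}_{m,k}\tau^{1-\alpha}_{1,2^m+k}(s)$, which is admissible since $\alpha<H_1$. For the right object I would apply \eqref{DTy} with derivative order $1-\alpha$, terminal point $t$ in place of $T$, and H\"older exponent $H_2$: the admissibility condition $1-\alpha<H_2$ is exactly the hypothesis $1-H_2<\alpha$, and because $1-(1-\alpha)=\alpha$ the superscript flips, yielding
$$D_{t-}^{1-\alpha}[g(\cdot)-g(t)](s)=-\sum_{n}2^{n(1-H_2)}\sum_{l}c^{(2)}_{n,l}\,\tau^{\alpha}_{2,2^n+l}(s,t).$$
Multiplying the two expansions and keeping the outer minus sign, the integrand becomes a double series with general term $2^{m(1-H_1)+n(1-H_2)}c^{(1)}_{m,k}c^{(2)}_{n,l}\,\tau^{1-\alpha}_{1,2^m+k}(s)\,\tau^{\alpha}_{2,2^n+l}(s,t)$.

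The heart of the argument is the scalar identity
$$\int_0^t \tau^{1-\alpha}_{1,2^m+k}(s)\,\tau^{\alpha}_{2,2^n+l}(s,t)\,ds=-\Delta^{2}_{2^m+k,2^n+l}(t),$$
which yields \eqref{DI:eq0} once the summation and integration are interchanged. To prove it I would first rewrite the two factors through the Haar functions via Lemma \ref{haarintegr} and Proposition \ref{propemk}, namely $\tau^{1-\alpha}_{1,2^m+k}(s)=2^{-m/2}I_{0+}^{1-\alpha}H_{m,k}(s)$ and $\tau^{\alpha}_{2,2^n+l}(s,t)=2^{-n/2}I_{t-}^{\alpha}H_{n,l}(s)$. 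Fractional integration by parts on $[0,t]$ (the formula in \cite{skm}) moves $I_{0+}^{1-\alpha}$ onto the second factor, and the semigroup property $I_{t-}^{1-\alpha}I_{t-}^{\alpha}=I_{t-}^{1}$ collapses it, giving $2^{-(m+n)/2}\int_0^t H_{m,k}(s)\,(e_{n,l}(t)-e_{n,l}(s))\,ds$, since $I_{t-}^1H_{n,l}(s)=e_{n,l}(t)-e_{n,l}(s)$.

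An ordinary integration by parts, with boundary terms vanishing because $e_{m,k}(0)=0$ and the factor $e_{n,l}(t)-e_{n,l}(s)$ is zero at $s=t$, turns this into $2^{-(m+n)/2}\int_0^t e_{m,k}(s)H_{n,l}(s)\,ds$. Writing $H_{n,l}$ as the difference of the two indicators of $J_{n,l}$ and $J_{n,l+0.5}$ and using the antiderivative $E_{m,k}(x):=\int_0^x e_{m,k}=I_{0+}^{2}H_{m,k}(x)=2^{m/2}\tau^{2}_{1,2^m+k}(x)$, the remaining integral, after splitting at the dyadic abscissae $l/2^n,\,(l+0.5)/2^n,\,(l+1)/2^n$ each capped at $t$, evaluates by the very definition \eqref{D:def} to $-2^{(m+n)/2}\Delta^{2}_{2^m+k,2^n+l}(t)$; the powers of $2$ cancel and the scalar identity follows.

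The step I expect to be the main obstacle is not this chain of reductions but the rigorous justification of exchanging the double infinite sum with the $ds$-integral. I would handle it as follows: by Proposition \ref{uniformb} the $m$-th block of $D_{0+}^\alpha f$ is bounded uniformly on $[0,t]$ by $Lc_1(\alpha)2^{m(\alpha-H_1)}$ and the $n$-th block of $D_{t-}^{1-\alpha}[g-g(t)]$ by $Lc_1(1-\alpha)2^{n(1-\alpha-H_2)}$, both summable since $\alpha<H_1$ and $1-\alpha<H_2$. Hence the two partial-sum sequences converge uniformly on $[0,t]$, their products converge uniformly, and both limits are uniformly bounded, so dominated convergence (equivalently, Fubini applied to the absolutely convergent double series of $\Delta^{2}$-terms produced by the scalar identity) legitimizes the term-by-term integration and delivers \eqref{DI:eq0}.
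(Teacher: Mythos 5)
Your proposal is correct and follows essentially the same route as the paper: expand both fractional derivatives via Theorem \ref{thmWF}, justify the term-by-term integration with the uniform block bounds from Proposition \ref{uniformb} and dominated convergence, and reduce everything to the pairwise integral, which collapses to $\int_0^t e_{m,k}(s)H_{n,l}(s)\,ds$ and hence, after splitting at the capped dyadic points, to $-\Delta^2_{2^m+k,2^n+l}(t)$. The only cosmetic difference is that you invoke the fractional integration-by-parts formula from \cite{skm} together with the semigroup property $I_{t-}^{1-\alpha}I_{t-}^{\alpha}=I_{t-}^{1}$, whereas the paper performs the same collapse inline via Fubini and the Beta-integral identity, and you replace the paper's final Fubini swap by an ordinary integration by parts.
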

\begin{proof}

Due to Theorem \ref{thmWF}, we have that $D_{0+}^{\alpha} f$ and $D_{t-}^{1-\alpha} (g(\cdot)-g(t))$ exist and converge uniformly as series \eqref{D0y} and \eqref{DTy}.
Therefore, $D_{0+}^\alpha f(s)D_{t-}^{1-\alpha}[g(\cdot) - g (t)](s)$ converges uniformly on $s\in (0,t)$ as well with the following bound
\begin{align*}\left| D_{0+}^\alpha f(s)D_{t-}^{1-\alpha}[g(\cdot) - g (t)](s)\right|
&\leq \sum_{n=0}^\infty \sum_{m=0}^\infty 2^{m(\alpha-H_1)+n(1-\alpha-H_2)} L^2 c_1(\alpha)c_1(1-\alpha),
\end{align*}
for all $s\in (0,t).$ So, we apply the Lebesgue dominated convergence theorem to the integral $\int_{0}^t D_{0+}^\alpha f(s)D_{t-}^{1-\alpha}[g(\cdot) - g (t)]ds,$ which  equals now
\begin{align}\label{integ}
    \sum_{n=0}^\infty \sum_{m=0}^\infty \sum_{k=0}^{2^m-1} \sum_{l=0}^{2^n-1}2^{m(\frac{1}{2}-H_1)+n(\frac{1}{2}-H_2)}c^{(1)}_{m,k} c^{(2)}_{n,l}\int_{0}^tD_{0+}^\alpha e_{m,k}(s)D_{t-}^{1-\alpha} [e_{n,l}-e_{n,l}(t)](s)ds.
\end{align}

Compute the   integral in \eqref{integ} using Proposition \ref{propemk}:
\begin{align}
\nonumber&\int_{0}^tD_{0+}^\alpha e_{m,k}(s)D_{t-}^{1-\alpha}[ e_{n,l}(\cdot)-e_{n,l}(t)](s)ds=-\int_{0}^t I_{0+}^{1-\alpha} H_{m,k}(s)I_{t-}^{\alpha} H_{n,l}(s)ds.\\
\nonumber&=-\frac{1}{\Gamma(1-\alpha)}\frac{1}{\Gamma(\alpha)} \int_{0}^t \int_0^s\int_s^t (s-u_1)^{-\alpha}H_{m,k}(u_1)(u_2-s)^{\alpha-1}H_{n,l}(u_2) du_2 du_1 ds \\
\nonumber&=-\frac{1}{B(1-\alpha,\alpha)}\int_{0}^t H_{n,l}(u_2)\int_0^{u_2}H_{m,k}(u_1)\int_{u_1}^{u_2} (s-u_1)^{-\alpha}(u_2-s)^{\alpha-1}ds du_1 du_2 \\
\label{DI:eq3}&=-\int_{0}^t H_{n,l}(u_2)\int_0^{u_2}H_{m,k}(u_1) du_1 du_2=-\int_{0}^t H_{n,l}(u)e_{m,k}(u)du.
\end{align}
Obviously, if $t<\frac{k}{2^m}\vee \frac{l}{2^m}$ the last integral equals zero.
\\
Let $t \in J_{n,l},$ then
\begin{align*}
  &\int_{0}^t H_{n,l}(u)e_{m,k}(u)du=   2^{\frac{n}{2}}\int_{\frac{l}{2^n}}^{t} e_{k,m}(u)du=2^{\frac{n}{2}}\left(I_{0+}^2H_{m,k}(t)-I_{0+}^2H_{m,k}\left(\frac{l}{2^n}\right)\right).
\end{align*}
If $t \in J_{n,l+0.5},$ then
\begin{align*}
  &\int_{0}^t H_{n,l}(u)e_{m,k}(u)du=2^{\frac{n}2}\int_{\frac{l}{2^n}}^{\frac{l+0.5}{2^n}}e_{m,k}(u)du-\int_{\frac{l+0.5}{2^n}}^t e_{m,k}(u)du\\
  &= 2^{\frac{n}{2}}\left(2I_{0+}^2H_{m,k}\left(\frac{l+0.5}{2^n}\right)-I_{0+}^2H_{m,k}\left(\frac{l}{2^n}\right)-I_{0+}^2H_{m,k}\left(t\right)\right).
\end{align*}
The case $t> \frac{l+1}{2^n}$ is similar. Thus, we have
\begin{align*}
  &\int_{0}^t H_{n,l}(u)e_{m,k}(u)du\\
  &=2^{\frac{n}{2}}\left(2I_{0+}^2H_{m,k}\left(t\wedge \frac{l+0.5}{2^n}\right)-I_{0+}^2H_{m,k}\left(t\wedge\frac{l}{2^n}\right)-I_{0+}^2H_{m,k}\left(t\wedge \frac{l+1}{2^n}\right)\right).
\end{align*}

Note that
\begin{align*}
    &\int_{0}^t H_{n,l}(u)e_{m,k}(u)du=\int_{0}^t H_{n,l}(u_2)\int_0^{u_2}H_{m,k}(u_1) du_1 du_2\\
    &=\int_{0}^t H_{m,k}(u_1) \int_{u_1}^{t}H_{n,l}(u_2)du_2 du_1 =\int_{0}^t H_{m,k}(u) [e_{n,l}(t)-e_{n,l}(u)]du.
\end{align*}
Then the statement follows from Lemma \ref{haarintegr}, relations \eqref{taudef} and \eqref{D:def}.
\end{proof}

\begin{remark}
The Riemann-Stieltjes integral in Theorem \ref{DI} can be written as
\begin{align}
 \nonumber   &\int_{0}^t f(s)dg(s)=-\int_{0}^t D_{0+}^\alpha f(s)D_{t-}^{1-\alpha}[g(\cdot) - g (t)](s)ds\\
\nonumber   &=\sum_{n=0}^\infty \sum_{l=0}^{2^n-1}2^{n\left(\frac{1}{2}-H_2\right)} c^{(2)}_{n,l}\int_{0}^t H_{n,l}(u)f(u)du\\
\nonumber    &=\sum_{m=0}^\infty \sum_{k=0}^{2^m-1} 2^{m\left(\frac{1}{2}-H_1\right)}c^{(1)}_{m,k} \int_{0}^t H_{m,k}(u) [g(t)-g(u)]du\\
\label{form3}&=\sum_{n=0}^\infty \sum_{m=0}^\infty \sum_{l_1=0}^{2^{n_1}-1} \sum_{l_2=0}^{2^{n_2}-1}2^{n_1(\frac{1}{2}-H_1)+n_2(\frac{1}{2}-H_2)}c^{(1)}_{n_1,l_1} c^{(2)}_{n_2,l_2} \int_{0}^t H_{n_2,l_2}(u)e_{n_1,l_1}(u)du.
\end{align}
\end{remark}

\begin{remark} Particularly, we have
\begin{align*}
    \int_0^t f(s) ds &=-\int_{0}^t D_{0+}^\alpha f(s)D_{t-}^{1-\alpha}[(\cdot) -  t](s)ds=I^1_{0+}f (t),\\
    \int_0^t dg(s) &=-\int_{0}^t D_{0+}^\alpha \mathbbm{1}_{[0,t]}(s)D_{t-}^{1-\alpha}[g(\cdot) -  g(t)](s)ds=g(t)-g(0),\\
    \int_0^t s dg(s) &=-\int_{0}^t D_{0+}^\alpha[(\cdot)](s)D_{t-}^{1-\alpha}[g(\cdot) -  g(t)](s)ds=tg(t)-I^1_{0+}g (t).
\end{align*}
\end{remark}

From \cite[Proposition 4.4.1]{Zahle} it follows that $\int_0^{\cdot}fdg \in \mathbf{H}^{H_2}[0,1].$

\begin{corollary} \label{cor1} The coefficients $x_0^R,x_1^R, c^R$ in Takagi-Landsberg representation of the Riemann-Stieltjes integral in Theorem \ref{DI} equal $x^R_0=0,$
\begin{equation*}
    x^R_1=-\sum_{n_1=0}^\infty \sum_{n_2=0}^\infty \sum_{l_1=0}^{2^{n_1}-1} \sum_{l_2=0}^{2^{n_2}-1}2^{n_1(1-H_1)+n_2(1-H_2)}c^{(1)}_{n_1,l_1} c^{(2)}_{n_2,l_2} \Delta^2_{2^{n_1}+l_1,2^{n_2}+{l_2}}(1),
\end{equation*}
\begin{align*}
    c^R_{m,k}&=\sum_{n_1=0}^\infty \sum_{l_1=0}^{2^{n_1}-1}c^{(1)}_{n_1,l_1} \sum_{n_2=0}^\infty 2^{(n_1+n_2-m)(\frac{1}{2}-H)} \sum_{l_2=0}^{2^{n_2}-1}c^{(2)}_{n_2,l_2}\int_{0}^{1} e_{n_1,l_1}(u)H_{n_2,l_2}(u)H_{m,k}(u)du\\
    &=2^{mH_2}\sum_{n_1=0}^\infty \sum_{l_1=0}^{2^{n_1}-1}c_{n_1,l_1} \sum_{n_2=0}^\infty \frac{2^{n_1(\frac{1}{2}-H_1)}}{2^{(m-n_2)(\frac{1}{2}-H_2)}} \sum_{l_2=0}^{2^{n_2}-1}c^{(2)}_{n_2,l_2}
    \\
    &\times\left(\Delta^2_{2^{n_1}+l_1,2^{n_2}+{l_2}}\left(\frac{k}{2^m}\right)-2\Delta^2_{2^{n_1}+l_1,2^{n_2}+{l_2}}\left(\frac{k+0.5}{2^m}\right)+\Delta^2_{2^{n_1}+l_1,2^{n_2}+{l_2}}\left(\frac{k+1}{2^m}\right)\right).
\end{align*}
\end{corollary}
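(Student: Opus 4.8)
The plan is to recognise that the object whose coefficients we are computing, $R(t):=\int_0^t f\,dg$, is \emph{itself} a Hölder continuous function: by the quoted \cite[Proposition 4.4.1]{Zahle} we have $R\in\mathbf{H}^{H_2}[0,1]$, so $R$ admits its own Takagi–Landsberg representation \eqref{TLr} with parameter $H=H_2$. Since every $e_{m,k}$ vanishes at the endpoints $0$ and $1$, reading off that representation identifies the scalar coefficients as $x_0^R=R(0)$ and $x_1^R=R(1)$, while the coefficient formula \eqref{eq:coef}, applied to $R$ with Hölder exponent $H_2$, gives
\[
c^R_{m,k}=2^{mH_2}\left[\,2R\!\left(\tfrac{k+0.5}{2^m}\right)-R\!\left(\tfrac{k+1}{2^m}\right)-R\!\left(\tfrac{k}{2^m}\right)\right].
\]
Thus the entire corollary reduces to evaluating $R$, and one fixed second-order difference of $R$, along the dyadic grid $\mathbb{T}_m$.

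First I would dispatch the two scalar coefficients. Because $R(0)=\int_0^0 f\,dg=0$, we get $x_0^R=0$ at once. For $x_1^R=R(1)$ I would simply specialise Theorem \ref{DI} to $t=1$: the right-hand side of \eqref{DI:eq0} at $t=1$ is, after the relabelling $(m,k,n,l)\mapsto(n_1,l_1,n_2,l_2)$, exactly the claimed expression for $x_1^R$.

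The substance is the formula for $c^R_{m,k}$. Here I would start from the uniformly convergent series \eqref{form3}, whose generic summand is $\Psi_{n_1,l_1,n_2,l_2}(t)=\int_0^t H_{n_2,l_2}(u)e_{n_1,l_1}(u)\,du$ carrying the weight $2^{n_1(1/2-H_1)+n_2(1/2-H_2)}c^{(1)}_{n_1,l_1}c^{(2)}_{n_2,l_2}$. Since the second-order difference above is a \emph{finite} linear combination of the values of $R$ at three points, and the series converges (absolutely, uniformly) at each of those points, I may pass the difference inside the sum term by term — no delicate interchange is needed beyond the linearity of convergent series, the absolute summability being precisely the bound established in the proof of Theorem \ref{DI} via Proposition \ref{uniformb}. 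For each summand the key computation is that the second difference of $\Psi$ telescopes over the dyadic subintervals $J_{m,k}$ and $J_{m,k+0.5}$ into a single Haar integral,
\[
2\Psi\!\left(\tfrac{k+0.5}{2^m}\right)-\Psi\!\left(\tfrac{k+1}{2^m}\right)-\Psi\!\left(\tfrac{k}{2^m}\right)=2^{-m/2}\int_0^1 H_{m,k}(u)H_{n_2,l_2}(u)e_{n_1,l_1}(u)\,du,
\]
which, after collecting the weights, yields the first (integral) form of $c^R_{m,k}$. To reach the second (closed) form I would then invoke the identity obtained inside the proof of Theorem \ref{DI}, namely $\int_0^t H_{n,l}e_{m,k}\,du=-2^{(m+n)/2}\Delta^2_{2^m+k,2^n+l}(t)$, cf. \eqref{DI:eq3} and \eqref{D:def}, applied on each dyadic subinterval; this converts the triple Haar integral into the three-point second difference of $\Delta^2_{2^{n_1}+l_1,2^{n_2}+l_2}$ displayed in the statement.

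I expect the only genuine difficulty to be the exponent bookkeeping rather than any analytic subtlety: one must verify that the prefactor $2^{mH_2}$ from \eqref{eq:coef}, the Haar normalisation $2^{-m/2}$, and the factors $2^{(m+n_1+n_2)/2}$ produced by the $\Delta^2$-identity combine with the series weights $2^{n_1(1/2-H_1)+n_2(1/2-H_2)}$ to reproduce the stated coefficients. Since the termwise operations are legitimised by the uniform bounds underlying Theorem \ref{DI}, this accounting is the whole of the matter and no new estimate is required.
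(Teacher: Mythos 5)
Your proposal is correct and follows essentially the same route as the paper's own proof: read off $x_0^R=R(0)=0$ and $x_1^R=R(1)$ from \eqref{DI:eq0} at $t=1$, apply the coefficient formula \eqref{eq:coef} to $R\in\mathbf{H}^{H_2}[0,1]$, pass the second-order dyadic difference through the series \eqref{form3} so that it telescopes into the triple integral $\int_0^1 e_{n_1,l_1}H_{n_2,l_2}H_{m,k}\,du$, and finally convert that integral into second differences of $\Delta^2_{2^{n_1}+l_1,2^{n_2}+l_2}$ via the identity from the proof of Theorem \ref{DI}. Your explicit justification of the termwise interchange (absolute summability from Proposition \ref{uniformb}) is a point the paper leaves implicit, but the argument is the same.
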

\begin{proof}
The value of $x_1^R$ follows from \eqref{DI:eq0}. Denote by $R(t)$ the value of the integral $\int_0^t f dg.$  The function $R\in\mathbf{H}^{H_2}[0,1]$ possesses the representation as a weighted Takagi-Landsberg function with coefficients $c^R$ given by
    $c^R_{m,k}=2^{mH}\left[2R\left(\frac{k+0.5}{2^m}\right)-R\left(\frac{k+1}{2^m}\right)-R\left(\frac{k}{2^m}\right)\right].$
%Due to the proof of Theorem \ref{DI} and \eqref{form3}, $R(t)$ equals
%\begin{align*}
%    R(t)&=-\sum_{n_1=0}^\infty \sum_{l_1=0}^{2^{n_1}-1}c_{n_1,l_1} \sum_{n_2=0}^\infty 2^{(n_1+n_2)(\frac{1}{2}-H)} \sum_{l_2=0}^{2^{n_2}-1}c^{g}_{n_2,l_2} \int_{0}^{t} e_{n_1,l_1}(u)H_{n_2,l_2}(u)du.
%\end{align*}
Then for $m\in \N_0$ and $k=0,\ldots,2^m-1$ we have from \eqref{form3} that
\begin{align*}
    &c^R_{m,k}=2^{mH_2}\sum_{n_1=0}^\infty \sum_{l_1=0}^{2^{n_1}-1}c^{(1)}_{n_1,l_1} \sum_{n_2=0}^\infty 2^{n_1(\frac{1}{2}-H_1)+n_2(\frac{1}{2}-H_2)} \sum_{l_2=0}^{2^{n_2}-1}c^{(2)}_{n_2,l_2}\\
    &\times\left(2\int_{0}^{\frac{k+0.5}{2^m}} e_{n_1,l_1}(u)H_{n_2,l_2}(u)du-\int_{0}^{\frac{k}{2^m}} e_{n_1,l_1}(u)H_{n_2,l_2}(u)du-\int_{0}^{\frac{k+1}{2^m}} e_{n_1,l_1}(u)H_{n_2,l_2}(u)du\right)\\
    &=\sum_{n_1=0}^\infty \sum_{l_1=0}^{2^{n_1}-1}c_{n_1,l_1} \sum_{n_2=0}^\infty \frac{2^{n_1(\frac{1}{2}-H_1)}}{2^{(m-n_2)(\frac{1}{2}-H_2)}} \sum_{l_2=0}^{2^{n_2}-1}c^{g}_{n_2,l_2}\int_{0}^{1} e_{n_1,l_1}(u)H_{n_2,l_2}(u)H_{m,k}(u)du.
\end{align*}

We can rewrite the last integral as
\begin{align}
    &\int_{0}^{1} e_{n_1,l_1}(u)H_{n_2,l_2}(u)H_{m,k}(u)du=2^{\frac{n_1+n_2+m}{2}}\\
\nonumber    &\times\left(\Delta^2_{2^{n_1}+l_1,2^{n_2}+{l_2}}\left(\frac{k}{2^m}\right)-2\Delta^2_{2^{n_1}+l_1,2^{n_2}+{l_2}}\left(\frac{k+0.5}{2^m}\right)+\Delta^2_{2^{n_1}+l_1,2^{n_2}+{l_2}}\left(\frac{k+1}{2^m}\right)\right).
\end{align}
\end{proof}

\begin{remark}
\label{remark:sg}Let $g(0)=g(1)=0.$ The integral $\int_0^t s dg(s)$ possesses  the following Takagi- Landsberg representation
\begin{align}
\nonumber    &\int_0^t s dg(s)=tg(t)-I^1_{0+}g(t)=
    -t \sum_{n=0}^\infty \sum_{l=0}^{2^{n}-1}2^{-{n}(1+H_2)-2}c^{(2)}_{n,l}\\
\label{int:sg}    &+\sum_{m=0}^\infty \sum_{k=0}^{2^{m}-1}2^{{m}\left(\frac{1}{2}-H_2\right)}e_{m,k}(t)\left[\frac{k c_{m,k}^{(2)}}{2^{m}}+2^{mH_2}\sum_{n=0}^\infty \sum_{l=0}^{2^{n}-1}2^{{n}\left(\frac{1}{2}-H_2\right)}c^{(2)}_{n,l}D_{2^n+l,2^m+k}\right].
\end{align}
where
$D_{2^n+l,2^m+k}:=\frac{1}{2^{m }}e_{n,l}\left(\frac{k+0.5}{2^m}\right)-\frac{1}{2^{m }}e_{n,l}\left(\frac{k+1}{2^m}\right)+2^{\frac{n}{2}}\Delta^2_{2^n+l,2^m+k}(1).$
\end{remark}

\section{Applications to fractional integral equations}
In this section, we solve integral equations, involving fractional integrals and derivatives, with the help of the  Takagi-Landsberg representations of the H\"{o}lder continuous functions.  To do so, we use the uniqueness of the Schauder expansion.

Let $H\in(0,1)$ and $g\in \mathbf{H}^H[0,1]$ have the  Takagi-Landsberg representation \eqref{TLr} with coefficients $g_0,g_1,c^g=\{c^g_{m,k}\}.$% i.e. $g$ admits representation \eqref{ydef} with coefficients $c_{m,k}^g.$

Denote by $S_m$ the operator that gives the partial sums of the Takagi-Landsberg expansion of $g$ by
\begin{equation}
    [S_m g](t):=g_0(1-t)+g_1 t+ \sum_{n=0}^{m}\sum_{l=0}^{2^n-1}2^{n\left(\frac{1}{2}-H\right)}c^g_{n,l}e_{n,l}(t),t\in [0,1].
\end{equation}
From the properties of the Schauder system  we get that  $g\left(\frac{k}{2^m}\right)=[S_{m-1} g]\left(\frac{k}{2^m}\right),$  $0\leq k\leq 2^m-1.$

In this section it is also convenient to make the new indexation of $c^g.$ We write $c^g_{n}$ for $c^g_{m,k}$ if $n=2^m+k,$ $m\geq 0, k=0,\ldots,2^m-1.$

\begin{remark}
Let $X:[0,1]\to \R$ be a $H_1$-H\"{o}lder continuous function. Consider a function $f:\R\to\R$ such  that  $f(X)\in \mathbf{H}^{H_2}.$  If $X$ admits representation \eqref{ydef} with coefficients $c_{m,k}^x,$ then $f(X)$ has representation  with coefficients $c_{m,k}^f,$ where
\begin{equation}
\label{coef:def}c_{m,k}^x=2^{mH_1}\left[2X\left(\frac{k+0.5}{2^m}\right)-X\left(\frac{k+1}{2^m}\right)-X\left(\frac{k}{2^m}\right)\right]
\end{equation}
and
\begin{align*}
    c_{m,k}^f&=2^{mH_2}\left[2f\left(X\left(\frac{k+0.5}{2^m}\right)\right)-f\left(X\left(\frac{k+1}{2^m}\right)\right)-f\left(X\left(\frac{k}{2^m}\right)\right)\right]\\
    &=2^{mH_2}\left[2f\left(S_{m}X\left(\frac{k+0.5}{2^m}\right)\right)-f\left(S_{m-1}X\left(\frac{k+1}{2^m}\right)\right)-f\left(S_{m-1}X\left(\frac{k}{2^m}\right)\right)\right].
\end{align*}
Thus, coefficients $c^f_m$ are determined by coefficients $\{c^x_n,n\leq m\}.$
\end{remark}

\subsection{Volterra integral equation}

Let $H<\alpha\in (0,1),$  $\theta\neq 0$ and $g\in \mathbf{H}^H[0,1],$ that is $g$ has the  Takagi-Landsberg representation with bounded coefficients $c^g=\{c_{m,k}^g\}$.   Consider the Volterra integral equation given by
\begin{equation}
\label{Lang:eq:def}    X(t)=x_0+\theta [I^{\alpha}X](t)+g(t), t\in [0,1].
\end{equation}
Equation \eqref{Lang:eq:def} is called also as the fractional Langevin equation, e.g. \cite{Fa}.

It follows from the general theory of integral equations  that  \eqref{Lang:eq:def} has a unique solution in $C[0,1]$, e.g. \cite[Section XII.6.2]{kantor}. Indeed, the operator $I^{\alpha}_{0+}$ has the norm $\|I^{\alpha}_{0+}\|_\infty=\frac{1}{\Gamma(\alpha)}\max_{t\in [0,1]}\left(\int_0^t (t-s)^{\alpha-1}ds\right)=\frac{1}{\Gamma(1+\alpha)}.$ Moreover, by \cite[formula (2.21)]{skm} its powers equal $[I^{\alpha}_{0+}]^n=I^{\alpha n}_{0+}$ with $\left\|[I^{\alpha}_{0+}]^n\right\|_\infty=\frac{1}{\Gamma(\alpha n+1)}.$ Denote by $\tilde{g}=x_0+g.$ A solution $X$ of equation \eqref{Lang:eq:def} can be expanded as a power series $X=\tilde{g} + \theta I^{\alpha}_{0+}\tilde{g}+\ldots +\theta^n I^{n\alpha}_{0+}\tilde{g}+\ldots$ which converges for all $\theta$ with
$$
    |\theta|< \lim_{n\to \infty} \left\|[I^{\alpha}_{0+}]^n\right\|^{-\frac{1}{n}}_\infty=\lim_{n\to \infty} \left(\Gamma(\alpha n +1)\right)^{\frac{1}{n}}=\lim_{n\to \infty} (2\pi)^{\frac{1}{2n}}e^{-\alpha} (n\alpha )^{\alpha-\frac{1}{2n}}=\infty,
$$
where the asymptotic behavior  of the Gamma function is given by \cite[formula 6.1.39]{abram}. Since operator $I^{\alpha}_{0+}$ maps $C[0,1]$ into $H^\alpha[0,1]$ (e.g. \cite[Corrolary 2, p 58]{skm}), the solution of \eqref{Lang:eq:def} belongs to $H^H[0,1].$

Thus, $X$ posses the Takagi-Landsberg representation \eqref{TLr} with $x_1\in \R$ and bounded coefficients $c^x=\{c_m^x,m\geq 0\}$
$$X(t)=x_0+(x_1-x_0)t+\sum_{n=0}^\infty\sum_{l=0}^{2^n-1}2^{n\left(\frac{1}{2}-H\right)}c^x_{2^n+l}e_{n,l}(t), t\in[0,1].$$
Then we  apply Lemma \ref{diff:parts} and formula \eqref{D0y} to get that $[I^{\alpha}X](t)$ has the following series representation
\begin{align*}
    &[I^{\alpha}X](t)=\frac{x_0}{\Gamma(1+\alpha)}t^\alpha+\frac{x_1-x_0}{\Gamma(\alpha+2)}t^{1+\alpha}+\sum_{n=0}^\infty\sum_{l=0}^{2^n-1}2^{n\left(1-H\right)}c^x_{2^n+l}\tau^{1+\alpha}_{1,2^n+l}(t), t\in[0,1].
\end{align*}

We introduce a truncated fractional integral $I^{\alpha}_{0+}S_p:\mathbf{H}^\alpha[0,1]\to \mathbf{H}^\alpha[0,1],$ of order $p\in \N$ as
\begin{align*}&[I^{\alpha}_{0+} S_p X](t)=\frac{x_0}{\Gamma(1+\alpha)}t^\alpha+\frac{x_1-x_0}{\Gamma(\alpha+2)}t^{1+\alpha}+\sum_{n=0}^p\sum_{l=0}^{2^n-1}c^x_{2^n+l}2^{n(1-H)}\tau^{1+\alpha}_{1,2^n+l}(t), t\in[0,1].
\end{align*}

Denote by $X_p$ the solution of the following truncated equation
\begin{equation}
\label{Lang:eq2}    X_p(t)=x_0+\theta [I^{\alpha}S_p X_p](t)+g(t), t\in [0,1].
\end{equation}
Obviously, $\|I^{\alpha}_{0+} S_p\|_{\infty}\leq \|I^{\alpha}_{0+}\|_\infty,$ thus \eqref{Lang:eq2} has a unique solution in $C[0,1].$ By construction $I^{\alpha}_{0+} S_p X_p\in \mathbf{H}^\alpha[0,1],$ so $X_p$ is  $H-$H\"{o}lder continuous on $[0,1]$ as well.

Here we give the solution of \eqref{Lang:eq2} by finding the coefficients $c^p$ and $x_1^p$ in the Takagi-Landsberg expansion \eqref{TLr} of $X_p$.

Denote by
\begin{align}
\label{a11}    &a_{2^m+k,2^n+l}=-\theta 2^{mH+n(1-H)}\Delta^{1+\alpha}_{2^n+l,2^m+k}\left(1\right),\quad a_{2^m+k,0}=2^{mH}\theta \tau^{1+\alpha}_{2,2^m+k}(0,1), \\
\label{a12}    & b_{2^m+k}=c^g_{2^m+k}+2^{mH}\theta x_0\tau^\alpha_{2,2^m+k}(0,1)-2^{mH}\theta x_0\tau^{1+\alpha}_{2,2^m+k}(0,1).
\end{align}
\begin{equation}
    \label{a13} b_0=x_0+g_{1}+\frac{\theta x_0}{\Gamma(1+\alpha)}-\frac{\theta x_0}{\Gamma(\alpha+2)}, \, a_{0,0}=\frac{\theta }{\Gamma(\alpha+2)}, \, a^p_{0,2^n+l}=\theta2^{n(1-H)}\tau^{1+\alpha}_{1,2^n+l}(1).
\end{equation}
\begin{lemma}
\label{Lang:sol}
Let $p\geq 1,P=2^{p+1}-1$ and denote by   $A_p=(a_{k,l})_{k,l=0}^{P},$
$\mathbf{C}_p=(x^p_1,c^p_1,\ldots,c^p_{P})^T,$ and $\mathbf{b}_p=(b_0,\ldots,b_P)^T,$ where $a_{k,l}$ and $b_k$ are given by \eqref{a11}-\eqref{a13}.
Let $\mathbf{C}_p$ be a solution of
\begin{equation}
    \label{Lang:eq5} \mathbf{C}_p=A_p \mathbf{C}_p +\mathbf{b}_p
\end{equation}
and $c^p_{m}=b_m+x_1^p a_{m,0}+\sum_{n=1}^{P}c^p_n a_{m,n},$ $m> P.$
Then the function
$$X_p=x_0(1-t)+x_1^p t +\sum_{m=0}^\infty\sum_{k=0}^{2^m-1}2^{m(\frac12-H)}c^p_{2^m+k}e_{m,k}(t), t\in [0,1]$$
is the solution of equation \eqref{Lang:eq2}.
\end{lemma}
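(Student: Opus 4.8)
The plan is to substitute the Takagi–Landsberg expansion of $X_p$ into the truncated equation \eqref{Lang:eq2} and match coefficients, using the uniqueness of the Schauder expansion. First I would write $\tilde g = x_0 + g$ and note that since $X_p$ is $H$-H\"older continuous it admits the representation $X_p(t)=x_0(1-t)+x_1^p t + \sum_{m,k}2^{m(\frac12-H)}c^p_{2^m+k}e_{m,k}(t)$. Applying the truncated integral operator $I^{\alpha}_{0+}S_p$ to $X_p$ yields, by the explicit formulas for $[I^{\alpha}_{0+}S_p X]$ and Lemma \ref{lemmaIe}/Proposition \ref{propemk}, an expression involving the functions $\tau^{1+\alpha}_{1,2^n+l}$. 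The right-hand side of \eqref{Lang:eq2} is therefore a known $H$-H\"older function whose own Takagi–Landsberg coefficients can be read off via the coefficient functional \eqref{eq:coef}, namely $c_{m,k}=2^{mH}[2F(\tfrac{k+0.5}{2^m})-F(\tfrac{k+1}{2^m})-F(\tfrac{k}{2^m})]$ applied to $F = x_0 + \theta I^{\alpha}_{0+}S_p X_p + g$.

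The key computational step is to evaluate this functional on the pieces of $F$. For the term $\theta I^{\alpha}_{0+}S_p X_p$, the contribution of each basis function $e_{n,l}$ enters through $\tau^{1+\alpha}_{1,2^n+l}$ evaluated at the dyadic points $\tfrac{k}{2^m},\tfrac{k+0.5}{2^m},\tfrac{k+1}{2^m}$; by the definition \eqref{D:def} of $\Delta^{1+\alpha}_{2^n+l,2^m+k}$ this combination collapses to $\Delta^{1+\alpha}_{2^n+l,2^m+k}(1)$, producing exactly the off-diagonal coefficients $a_{2^m+k,2^n+l}$ of \eqref{a11}. The polynomial part $\tfrac{x_0}{\Gamma(1+\alpha)}t^\alpha + \tfrac{x_1^p-x_0}{\Gamma(\alpha+2)}t^{1+\alpha}$ contributes the entries $a_{m,0}$, $a_{0,0}$ and the inhomogeneous shifts in $b_m$, while $g$ contributes its own coefficient $c^g_{2^m+k}$ to $b_{2^m+k}$ and $g_1$ to $b_0$. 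Matching the coefficient of each $e_{m,k}$ then gives the scalar recursion $c^p_{2^m+k}=b_{2^m+k}+x_1^p a_{2^m+k,0}+\sum_{n\geq 1} c^p_n a_{2^m+k,n}$, and matching the coefficient of $t$ gives the equation for $x_1^p$. Because $S_p$ truncates the expansion at level $p$, only indices $n\leq P=2^{p+1}-1$ appear in the sums defining the action of $I^{\alpha}_{0+}S_p$, so for $m\le P$ the system closes into the finite linear system \eqref{Lang:eq5}, and for $m>P$ the coefficients $c^p_m$ are given explicitly in terms of $\mathbf{C}_p$ by the stated formula.

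Finally I would verify that the function built from the solution $\mathbf{C}_p$ of \eqref{Lang:eq5} together with the explicit higher coefficients indeed satisfies \eqref{Lang:eq2}: substituting it into both sides and comparing Takagi–Landsberg coefficients recovers the system by construction, and since the Schauder expansion is unique and \eqref{Lang:eq2} has a unique solution in $C[0,1]$ (as $\|I^{\alpha}_{0+}S_p\|_\infty\le\|I^{\alpha}_{0+}\|_\infty<\infty$), this function is \emph{the} solution. The main obstacle I anticipate is the bookkeeping in the coefficient functional: one must carefully separate the $t^\alpha$ and $t^{1+\alpha}$ boundary/polynomial contributions (which land in the $n=0$ column and in $b_0$) from the genuine basis contributions, and confirm that the $t$-linear term of $F$ matches $x_1^p-x_0$ so that the first row of \eqref{Lang:eq5} is exactly the equation for $x_1^p$. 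Verifying that the finite truncation is consistent — i.e. that $S_p$ really removes all dependence on $c^p_n$ for $n>P$ from the right-hand side, so the top-left $(P+1)\times(P+1)$ block decouples — is the crux that makes the system finite-dimensional.
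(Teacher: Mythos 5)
Your proposal is correct and follows essentially the same route as the paper: both apply the second-difference coefficient functional \eqref{eq:coef} (i.e.\ uniqueness of the Takagi--Landsberg expansion) to the truncated equation \eqref{Lang:eq2}, identify the combinations of $\tau^{1+\alpha}_{1,2^n+l}$ at dyadic points with $\Delta^{1+\alpha}_{2^n+l,2^m+k}(1)$ to produce the matrix entries \eqref{a11}--\eqref{a13}, and obtain the equation for $x_1^p$ from the value at $t=1$ (which is the same as your matching of the linear term), exactly as in the paper's relations \eqref{Lang:eq3} and \eqref{Lang:eq4}. Your closing verification that the constructed function indeed solves \eqref{Lang:eq2} (via uniqueness of the Schauder expansion and of the solution in $C[0,1]$) is a slightly more explicit version of the paper's concluding step, not a different argument.
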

\begin{proof}
Since the Takagi-Lansdberg expansion is unique, and its coefficients are determined by \eqref{coef:def},   we have the following relations
\begin{align}
\nonumber     &c^p_{2^m+k}=c^g_{2^m+k}+2^{mH}\theta x_0\tau^\alpha_{2,2^m+k}(0,1)+2^{mH}\theta (x_1^p-x_0)\tau^{1+\alpha}_{2,2^m+k}(0,1)\\
\nonumber     &+2^{mH}\theta \sum_{n=0}^p\sum_{l=0}^{2^n-1}c^p_{2^n+l}2^{n(1-H)}\left(2\tau^{1+\alpha}_{1,2^n+l}\left(\frac{k+0.5}{2^m}\right)-\tau^{1+\alpha}_{1,2^n+l}\left(\frac{k+1}{2^m}\right)-\tau^{1+\alpha}_{1,2^n+l}\left(\frac{k}{2^m}\right)\right)\\
\label{Lang:eq3}    &=b_{2^m+k}+x_1^pa_{2^m+k,0}+\sum_{n=0}^p\sum_{l=0}^{2^n-1}c^p_{2^n+l}a^p_{2^m+k,2^n+l}.
\end{align}

At point $t=1$ equation \eqref{Lang:eq2} gives the next relation
\begin{align}
\nonumber    x_1^p&=x_0+g_{1}+\frac{\theta x_0}{\Gamma(1+\alpha)}+\frac{\theta (x_1^p-x_0)}{\Gamma(\alpha+2)}+\theta \sum_{n=0}^p\sum_{l=0}^{2^n-1}c^p_{2^n+l}2^{n(1-H)}\tau^{1+\alpha}_{1,2^n+l}(1)\\
\label{Lang:eq4}    &=b_0+x_1^pa_{0,0}+\sum_{n=0}^p\sum_{l=0}^{2^n-1}c^p_{2^n+l}a^p_{0,2^n+l}.
\end{align}

Then relations \eqref{Lang:eq3} and \eqref{Lang:eq4} yield the statement of the Lemma.
\end{proof}

\begin{lemma}
\label{lemma:cont1}
Let $X_p$ be the solution of equation \eqref{Lang:eq2}, then $X_p$ tends to the solution of \eqref{Lang:eq:def} in the supremum norm on [0,1].
\end{lemma}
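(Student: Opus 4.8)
The plan is to compare the two fixed-point equations directly and to reduce everything to a single estimate on the resolvent of the truncated operator. Subtracting \eqref{Lang:eq2} from \eqref{Lang:eq:def} and writing $e_p:=X-X_p$, the terms $x_0$ and $g$ cancel and I obtain $e_p=\theta I^\alpha_{0+}(X-S_pX_p)$. Splitting $X-S_pX_p=(X-S_pX)+S_p(X-X_p)$ gives the perturbed fixed-point identity
\begin{equation*}
e_p=r_p+\theta I^\alpha_{0+}S_p\,e_p,\qquad r_p:=\theta I^\alpha_{0+}(X-S_pX).
\end{equation*}
Since the true solution is $H$-H\"older (as established above, $X\in\mathbf H^H[0,1]$) and $S_pX$ is its piecewise-linear interpolant at the mesh $\mathbb T_{p+1}$, one has $\|X-S_pX\|_\infty\le C2^{-(p+1)H}\to0$; together with $\|I^\alpha_{0+}\|_\infty=\Gamma(1+\alpha)^{-1}$ this yields $\|r_p\|_\infty\to0$. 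Hence $e_p=(\mathrm{Id}-\theta I^\alpha_{0+}S_p)^{-1}r_p$, and the whole statement follows once I show that the operators $(\mathrm{Id}-\theta I^\alpha_{0+}S_p)^{-1}$ are bounded uniformly in $p$.

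This uniform bound is the main obstacle. It cannot be obtained by a naive perturbation argument: $I^\alpha_{0+}S_p$ does \emph{not} converge to $I^\alpha_{0+}$ in operator norm (a train of unit tents supported on the mesh intervals has $\|f-S_pf\|_\infty=\|f\|_\infty$ yet survives $I^\alpha_{0+}$), and the submultiplicative estimate only gives $\|(I^\alpha_{0+}S_p)^n\|\le\Gamma(1+\alpha)^{-n}$, which is not summable against $|\theta|^n$ for large $|\theta|$. Instead I intend to exploit positivity. Both $I^\alpha_{0+}$ (nonnegative kernel) and $S_p$ (interpolation of nonnegative nodal data) are positive operators, and $S_p\mathbbm 1=\mathbbm 1$; hence $T_p:=I^\alpha_{0+}S_p$ is positive and $\|T_p^n\|_\infty=\|T_p^n\mathbbm 1\|_\infty=h_n(1)$, where $h_n:=T_p^n\mathbbm 1$ is nonnegative and nondecreasing (both factors preserve monotonicity).

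The key quantitative step is a one-step comparison with the genuine fractional integral. For nondecreasing $h$ the interpolant obeys $S_ph(s)\le h(s+\delta)$ with $\delta=2^{-(p+1)}$, and since $I^\alpha_{0+}$ maps bounded functions into $\alpha$-H\"older ones I get
\begin{equation*}
h_n(t)\le I^\alpha_{0+}h_{n-1}(t)+C\,\delta^\alpha\|h_{n-1}\|_\infty,\qquad t\in[0,1],
\end{equation*}
where the shift costs only an $\alpha$-H\"older increment of size $C\delta^\alpha\|h_{n-1}\|_\infty$. Unrolling this recursion against the exact iterates $I^{n\alpha}_{0+}\mathbbm 1=t^{n\alpha}/\Gamma(1+n\alpha)$ and summing the resulting convolution with weights $|\theta|^n$, the Mittag--Leffler function $E_\alpha(|\theta|)=\sum_n|\theta|^n/\Gamma(1+n\alpha)$ appears twice and one is left with a bound of the form $\sum_n|\theta|^nh_n(1)\le E_\alpha(|\theta|)+O(\delta^\alpha)$. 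For $p$ large enough the correction is $<\tfrac12$, so $\sum_n|\theta|^n\|T_p^n\|$ is finite and bounded uniformly in $p$; this is exactly $\sup_p\|(\mathrm{Id}-\theta I^\alpha_{0+}S_p)^{-1}\|_\infty<\infty$.

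Combining the two parts, $\|X-X_p\|_\infty=\|e_p\|_\infty\le\big(\sup_p\|(\mathrm{Id}-\theta I^\alpha_{0+}S_p)^{-1}\|_\infty\big)\,\|r_p\|_\infty\to0$, which is the assertion. I expect the positivity/Mittag--Leffler estimate of the third paragraph to be the delicate point, everything else being a routine cancellation plus the interpolation error $\|X-S_pX\|_\infty\to0$.
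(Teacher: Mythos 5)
Your argument is correct, but it is genuinely different from the paper's, and the difference lies exactly in where the difficulty lands. The paper writes the error equation with the \emph{full} operator acting on the error: with $err_p = X_p - X$ it uses $err_p = \theta I^{\alpha}_{0+} err_p + \theta I^{\alpha}_{0+}[S_pX_p - X_p]$, so the resolvent it needs is $(\mathrm{Id}-\theta I^{\alpha}_{0+})^{-1}$, whose norm is already known to be at most $\sum_{n\ge0}|\theta|^n/\Gamma(n\alpha+1)<\infty$ by the semigroup property $[I^{\alpha}_{0+}]^n = I^{n\alpha}_{0+}$; no uniformity in $p$ ever arises, and all the work goes into showing $\|I^{\alpha}_{0+}[S_pX_p - X_p]\|_\infty\to0$, which the paper gets from the comparison $|X_p - S_pX_p|\le L\,(x^H - S_p x^H)$ with the Takagi--Landsberg function, using boundedness of the Schauder coefficients of $X_p$. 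You instead keep the \emph{truncated} operator on the error, $e_p = r_p + \theta I^{\alpha}_{0+}S_p e_p$, which makes the forcing term $r_p=\theta I^{\alpha}_{0+}(X-S_pX)$ trivial to kill, but forces you to prove $\sup_p\|(\mathrm{Id}-\theta I^{\alpha}_{0+}S_p)^{-1}\|_\infty<\infty$ --- a real obstacle, correctly diagnosed, since the naive bound $\|(I^{\alpha}_{0+}S_p)^n\|\le\Gamma(1+\alpha)^{-n}$ is useless for $|\theta|\ge\Gamma(1+\alpha)$. Your positivity route does close it: $T_p=I^{\alpha}_{0+}S_p$ is positive with $S_p\mathbbm{1}=\mathbbm{1}$, both factors preserve nonnegative nondecreasing functions, the interpolation bound $S_p h\le h((\cdot+\delta)\wedge 1)$ together with the $\alpha$-H\"older estimate for fractional integrals of bounded functions gives $h_n\le I^{\alpha}_{0+}h_{n-1}+C\delta^{\alpha}\|h_{n-1}\|_\infty$, and unrolling this by induction and summing at $t=1$ against $|\theta|^n$ (do it on partial sums, so you never manipulate a series not yet known to be finite) yields $\sum_n |\theta|^n\|T_p^n\|\le 1+\bigl(E_\alpha(|\theta|)-1\bigr)/\bigl(1-C\delta^\alpha|\theta|E_\alpha(|\theta|)\bigr)$, with $E_\alpha(x)=\sum_{n\ge0}x^n/\Gamma(1+n\alpha)$, which is uniformly bounded once $C\delta^\alpha|\theta|E_\alpha(|\theta|)\le\tfrac12$. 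So both proofs are sound; the paper's is much shorter because its decomposition recycles the Mittag--Leffler resolvent bound it already has, while yours requires a new operator estimate. What your version buys in exchange: the forcing term depends only on the fixed solution $X$, so you never need the Schauder coefficients of $X_p$ to be bounded uniformly in $p$ (a point the paper uses implicitly when it writes $|c_{m,k}|\le L$ for $X_p$), and your uniform resolvent bound additionally gives solvability of the truncated equation \eqref{Lang:eq2} for all large $p$ and arbitrary $\theta$, which the paper's contraction remark does not obviously cover.
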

\begin{proof}
Let $X$ be the solution  of \eqref{Lang:eq:def}.
Denote by $err_p=X_p-X.$ %weighted Takagi-Landsberg function. Then
Note that \begin{equation}
    \label{Lang:eq6} err_p(t)=\theta I^{\alpha}_{0+}S_p X_p(t) - \theta  I^{\alpha}_{0+}X(t)= \theta I^{\alpha}_{0+}err_p(t) + \theta I^{\alpha}_{0+}[S_p X_p-X_p](t).
\end{equation}
Due to the power series expansion of  $err_p$ as a solution of equation \eqref{Lang:eq6} we have
$$\|err_p\|_{\infty}\leq \left(1+\sum_{n=1}^\infty |\theta|^n \|I^{\alpha n}_{0+}\| \right) \left\|\theta I^{\alpha}_{0+}[S_p X_p-X_p]\right\|_\infty.$$

Then $|X_p(t)-S_p X_p(t)|\leq\sum_{m=p+1}^{\infty}\sum_{k=0}^{2^m-1}2^{m\left(\frac{1}{2}-H\right)}|c_{m,k}|e_{m,k}(t)\leq L (x^H(t)-S_p x^H(t)),$ where $x^H$ is a Takagi-Landsberg function. The second term in the right-hand side of \eqref{Lang:eq6} is bounded by
\begin{align}
\label{estSP1}&\frac{|\theta|}{\Gamma(\alpha)}\left|\int_0^t\frac{S_p X_p (u)-X(u)}{(t-u)^{1-\alpha}}du\right|\leq \frac{1}{\Gamma(\alpha)}\int_0^t\frac{\left|S_p X_p (u)-X_p(u)\right|}{(t-u)^{1-\alpha}}du\\
\nonumber&\leq  \frac{L}{\Gamma(\alpha)}\int_0^t\frac{x^H(u)-S_p x^H (u)}{(t-u)^{1-\alpha}}du\leq \frac{Lt^{\alpha}}{\Gamma(1+\alpha)}\sup_{u\in[0,1]}(x^H(u)-S_p x^H (u)).
\end{align}
Thus, $\|I^{\alpha}_{0+}[S_p X_p-X_p](t)\|_{\infty}\to 0$ as $p\to \infty.$ This yields that $\left\|X-X_p\right\|_\infty\to 0,p\to \infty.$
\end{proof}

\subsection{A linear differential equation}

Let $\beta,\gamma\in \R$ and $\beta\neq 0,$ $\gamma\neq 0.$ Let $g:[0,1]\to \R$ be a H\"{o}lder continuous of order $H>\frac{1}{2},$ with $g(0)=g(1)=0,$ that is $g$ be a weighted Takagi-Landsberg function with bounded coefficients $c^g=\{c_{m,k}^g,m\geq 0, k=0,\ldots,2^m-1\}.$ Let $\alpha \in (1-H,H).$ Consider the linear equation
\begin{align}
 \nonumber   X(t)&=x_0+\beta I_{0+}^1X (t)+\gamma\int_{0}^t X(z)d g(z)\\
\label{lin:eq:def}&=x_0+\beta\int_0^t X(z)dz-\gamma  \int_{0}^t [D_{0+}^\alpha X](z) [D_{t-}^{1-\alpha}(g(\cdot)-g(t))](z)dz, t\in [0,1].
\end{align}

Denote by $U: \mathbf{H}^H\to \mathbf{H}^H$ the operator $U(x)=\beta I_{0+}^1x +\gamma \int_{0}^\cdot x d g.$
It was shown in \cite{nualart} that $U$ is a compact linear operator on Banach space  $W_0^{\alpha,\infty}$ with respect to the the norm $\|f\|_{\alpha,\infty}:=\sup_{t\in[0,1]}\left(|f(t)|+\int_0^t\frac{|f(t)-f(s)|}{(t-s)^{\alpha+1}}ds\right)$ and for $\lambda\geq 0$ an equivalent norm is defined by $\|f\|_{\alpha,\lambda}:=\sup_{t\in[0,1]}e^{-\lambda t}\left(|f(t)|+\int_0^t\frac{|f(t)-f(s)|}{(t-s)^{\alpha+1}}ds\right).$ Moreover, there exists $\lambda_0>0$ such that
$$\|U(x)-U(y)\|_{\alpha,\lambda_0}\leq \frac{1}{2}\|x-x\|_{\alpha,\lambda_0},  \forall x,y\in U(B_0)\subset B_0=\{u\in W_0^{\alpha,\infty}: \|u\|_{\alpha,\lambda_0}\leq 2(1+|x_0|)\}.$$
This ensures, that there exists a unique solution $X\in W_0^{\alpha,\infty}$ of equation \eqref{lin:eq:def}, e.g. \cite[Theorem 5.1]{nualart}.

Let us apply the Takagi-Landsberg expansion to solve \eqref{lin:eq:def}.
Using notation \eqref{taudef}, we get that the first integral in the right hand side of \eqref{lin:eq:def} has the following representation
$$\int_0^t X(s)ds=x_0 t+\frac{x_1-x_0}{2}t^{2}-\sum_{n=0}^\infty\sum_{l=0}^{2^n-1}2^{n(1-H)}c^x_{2^n+l}\tau^2_{1,2^n+l}(t), t\in[0,1].
$$
The Riemann-Stieltjes integral in \eqref{lin:eq:def}  is $H-$H\"{o}lder continuous and admits the following representation due to Theorem \ref{DI} and Remark \ref{remark:sg}.
\begin{align}&\int_0^t X(s)dg(s)=x_0 g(t)+(x_1-x_0)(tg(t)-I_{0+}^1 g(t))\\
&- \sum_{n=0}^\infty \sum_{m=0}^\infty \sum_{k=0}^{2^m-1} \sum_{l=0}^{2^n-1}2^{m(1-H_1)+n(1-H_2)}c^{x}_{m,k} c^{g}_{n,l}\Delta^2_{2^m+k,2^n+l}(t), t\in[0,1].
\end{align}

Denote by $X_p$ the solution of the following truncated equation
\begin{equation}
\label{LE:eq2}    X_p(t)=x_0+\beta [I_{0+}^1 S_p X_p](t) -\gamma  \int_{0}^t [D_{0+}^\alpha S_p X](z) [D_{t-}^{1-\alpha} S_p (g- g(t))](z)dz, t\in [0,1].
\end{equation}

Denote by
\begin{align}
\label{a21}  & a_{2^m+k,2^n+l}=-2^{mH}\beta 2^{n(1-H)} \Delta^2_{2^n+l,2^m+k}(1)+\gamma \sum_{n_2=0}^p 2^{(n+n_2)(1-H)+mH} \sum_{l_2=0}^{2^{n_2}-1}c^{g}_{n_2,l_2}\\
&\times\left(\Delta^2_{2^{n}+l,2^{n_2}+{l_2}}\left(\frac{k}{2^m}\right)-2\Delta^2_{2^{n}+l,2^{n_2}+{l_2}}\left(\frac{k+0.5}{2^m}\right)+\Delta^2_{2^{n}+l,2^{n_2}+{l_2}}\left(\frac{k+1}{2^m}\right)\right),\\
\label{a22}    &a_{2^m+k,0}=-\frac{\beta x_0 2^{m(H-2)}}{4}+\gamma\frac{k c^{g}_{m,k}}{2^m}+\gamma\sum_{n_2=0}^p \sum_{l_2=0}^{2^{n_2}-1}2^{mH}2^{{n_2}\left(\frac{1}{2}-H\right)}c^g_{n_2,l_2}D_{2^n_2+l_2,2^m+k},\\
\label{a23}    &a_{0,2^n+l}=\beta 2^{-n(H+1)-2}-\gamma
   \sum_{n_2=0}^p 2^{(n_1+n_2)(1-H)} \sum_{l_2=0}^{2^{n_2}-1}c^{g}_{n_2,l_2} \Delta_{2^n+l,2^{n_2}+l_2}(1),\\
\label{a24}  &b_{2^m+k}=-a_{2^m+k,0}+\gamma x_0c_{m,k}^{g}, \,  a_{0,0}=\frac{\beta}{2}-\gamma \sum_{n_2=0}^p \sum_{l_2=0}^{2^{n_2}-1}2^{-{n_2}(1+H)-2}c^g_{n_2,l_2},\\
\label{a25}&b_{0}=\frac{\beta x_0}{2}+\gamma x_0\sum_{n_2=0}^p \sum_{l_2=0}^{2^{n_2}-1}2^{-{n_2}(1+H)-1}c^g_{n_2,l_2}.
\end{align}

\begin{lemma}
\label{LE:sol}
Let $p\geq 1,P=2^{p+1}-1$ and denote by   $A_p=(a_{k,l})_{k,l=0}^{P},$
$\mathbf{C}_p=(x^p_1,c_1^p,\ldots,c^p_{P})^T,$ and $\mathbf{b}_p=(b_0,\ldots,b_P)^T,$ where $a_{k,l}$ and $b_k$ are given by \eqref{a21}-\eqref{a25}.
Let $\mathbf{C}_p$ be a solution of
\begin{equation}
    \label{LE:eq5} \mathbf{C}_p=A_p \mathbf{C}_p +\mathbf{b}_p
\end{equation}
and $c^p_{m}=b_m+x_1^p a_{m,0}+\sum_{n=1}^{P}c^p_n a_{m,n},$ $m> P$.
Then the function
$$X_p=x_0(1-t)+x_1^p t +\sum_{m=0}^\infty\sum_{k=0}^{2^m-1}2^{m(\frac12-H)}c^p_{2^m+k}e_{m,k}(t), t\in [0,1]$$
is the solution of equation \eqref{LE:eq2}.
\end{lemma}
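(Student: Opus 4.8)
The plan is to mirror the argument for the Volterra case in Lemma \ref{Lang:sol}, exploiting the uniqueness of the Takagi-Landsberg expansion. Since $I^1_{0+}S_p$ and the truncated Riemann-Stieltjes operator both map $\mathbf{H}^H[0,1]$ into itself, the solution $X_p$ of \eqref{LE:eq2} lies in $\mathbf{H}^H[0,1]$ and therefore admits a unique representation with slope $x_1^p$ and coefficients $c^p_m$ recovered from the dyadic values of $X_p$ through the finite-difference formula \eqref{coef:def}. My strategy is to substitute the right-hand side of \eqref{LE:eq2} into \eqref{coef:def} and read off a self-consistent system for the $c^p_m$.

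First I would write the two integral terms on the right of \eqref{LE:eq2} in Takagi-Landsberg form. For the linear term $\beta I^1_{0+}S_p X_p$ I would use the representation $\int_0^t X_p(s)ds = x_0 t + \frac{x_1^p-x_0}{2}t^2 - \sum_{n,l}2^{n(1-H)}c^x_{2^n+l}\tau^2_{1,2^n+l}(t)$ recorded just before \eqref{LE:eq2}, restricted to levels $n\le p$. For the Riemann-Stieltjes term I would use Theorem \ref{DI} for the series part of $X_p$, together with the $\int s\,dg$ decomposition of Remark \ref{remark:sg} for the $x_0$- and $x_1^p$-carrying polynomial parts, again truncated by $S_p$. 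Applying the operator $2f(\frac{k+0.5}{2^m})-f(\frac{k+1}{2^m})-f(\frac{k}{2^m})$ and multiplying by $2^{mH}$ as in \eqref{coef:def} then expresses each $c^p_{2^m+k}$ as an affine function of $x_0$, $x_1^p$, and the finite family $\{c^p_{2^n+l}: n\le p\}$; evaluating \eqref{LE:eq2} at $t=1$, where $X_p(1)=x_1^p$ and all $e_{m,k}(1)=0$, supplies the companion equation for $x_1^p$.

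The crucial structural observation is that the truncation $S_p$ forces only the coefficients $c^p_n$ with $n\le P=2^{p+1}-1$ to enter the right-hand side. Hence the relations for $(x_1^p,c_1^p,\ldots,c^p_P)$ close up into the finite linear system \eqref{LE:eq5}, $\mathbf{C}_p=A_p\mathbf{C}_p+\mathbf{b}_p$, while for every $m>P$ the same relation reads $c^p_m=b_m+x_1^pa_{m,0}+\sum_{n=1}^{P}c^p_n a_{m,n}$ with the low-order coefficients already fixed. Conversely, any $\mathbf{C}_p$ solving \eqref{LE:eq5}, extended by this explicit formula, reconstructs a function satisfying \eqref{LE:eq2}, so it suffices to identify the entries $a_{k,l}$ and $b_k$ with \eqref{a21}--\eqref{a25}.

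The main obstacle is precisely this identification: it is a long but routine bookkeeping exercise to match the coefficients produced by the two series against \eqref{a21}--\eqref{a25}. One must correctly separate the polynomial contributions carrying $x_0$ and $x_1^p$ (the $t$ and $t^2$ parts of $\beta I^1_{0+}$ and the $tg(t)-I^1_{0+}g$ part of the Stieltjes integral), assemble the double-index $\Delta^2$ terms from Theorem \ref{DI}, and incorporate the $D_{2^n+l,2^m+k}$ corrections from Remark \ref{remark:sg}; keeping the weights $2^{mH}$, $2^{n(1-H)}$ and the half-node differences aligned is where care is needed, but no new idea beyond the template of Lemma \ref{Lang:sol} is required.
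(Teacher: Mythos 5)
Your proposal is correct and follows essentially the same route as the paper: the paper likewise uses the uniqueness of the Takagi--Landsberg expansion together with Remark \ref{remark:sg}, Corollary \ref{cor1} and the template of Lemma \ref{Lang:sol} to express each $c^p_{2^m+k}$ (via \eqref{coef:def}) and $x_1^p$ (via evaluation at $t=1$) as affine functions of $x_1^p$ and $\{c^p_{2^n+l}: n\le p\}$, which is exactly the system \eqref{LE:eq5} plus the explicit formula for $m>P$. The only difference is that the paper writes out the two resulting coefficient identities in full, i.e., it performs the bookkeeping you defer as routine.
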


\begin{proof}From Remark \ref{remark:sg}, Lemma \ref{Lang:sol} and Corollary \ref{cor1} we have the following relations for the coefficients $c^p$ in the Takagi-Landsberg expansion of $X_p.$
\begin{align*}
\nonumber     c^p_{2^m+k}&=\beta (x_0-x_1^p)2^{m(H-2)-2}-2^{mH}\beta \sum_{n=0}^p\sum_{l_1=0}^{2^{n_1}-1}c^p_{2^{n_1}+l_1}2^{n_1(1-H)}\Delta^2_{2^{n_1}+l_1,2^{m}+k}(1)+\gamma x_0c_{m,k}^g\\
\nonumber &+\gamma(x^p_1-x_0)\frac{k c_{m,k}^{g}}{2^{m}}+\gamma(x^p_1-x_0)\sum_{n_2=0}^p \sum_{l_2=0}^{2^{n_2}-1}2^{mH}2^{{n_2}\left(\frac12-H\right)}c^g_{n_2,l_2}D_{2^{n_2}_2+l_2,2^m+k}\\
\nonumber &+\gamma\sum_{n_1=0}^p \sum_{l_1=0}^{2^{n_1}-1}c^p_{2^{n_1}+l_1} \sum_{n_2=0}^p 2^{(n_1+n_2-m)(\frac{1}{2}-H)} \sum_{l_2=0}^{2^{n_2}-1}c^{g}_{n_2,l_2}\int_{0}^{1} e_{n_1,l_1}(u)H_{n_2,l_2}(u)H_{m,k}(u)du\\
    &=b_{2^m+k}+x_1^p a_{2^m+k,0}+\sum_{n=0}^p\sum_{l=0}^{2^n-1}c^p_{2^n+l}a^p_{2^m+k,2^n+l},
\end{align*}
%-----------------------------------
and
\begin{align*}
\nonumber     x^p_1&= x_0+\beta\frac{x_0+x_1^p}{2}+ \beta \sum_{n=0}^p\sum_{l=0}^{2^n-1}c^p_{2^n+l} 2^{-n(H+1)-2}-\gamma(x_1^p-x_0)\sum_{n_2=0}^p \sum_{l_2=0}^{2^{n_2}-1}2^{-{n_2}(1+H)-2}c^g_{n_2,l_2}\\
&-\gamma\sum_{n_1=0}^p\sum_{n_2=0}^p \sum_{l_1=0}^{2^{n_1}-1} \sum_{l_2=0}^{2^{n_2}-1}2^{n_1(1-H)+n_2(1-H)}c^p_{n_1,l_1} c^{g}_{n_2,l_2} \Delta^2_{2^{n_1}+l_1,2^{n_2}+{l_2}}(1)\\
%&\sum_{n_1=0}^p \sum_{l_1=0}^{2^{n_1}-1}c^p_{n_1,l_1} \sum_{n_2=0}^p 2^{(n_1+n_2)(\frac{1}{2}-H)} \sum_{l_2=0}^{2^{n_2}-1}c^{g}_{n_2,l_2} \int_{0}^{1} e_{n_1,l_1}(u)H_{n_2,l_2}(u)du\\
\label{LE:eq4}    &:=b_0+x_1a_{0,0}+\sum_{n_1=0}^p\sum_{l_1=0}^{2^{n_1}-1}c^p_{2^{n_1}+l_1}a^p_{0,2^{n_1}+l_1}.
\end{align*}
\end{proof}

\begin{lemma}
Let $X_p$ be the solution of equation \eqref{LE:eq2}. Then $X_p$ tends to the solution of \eqref{lin:eq:def} in the norm $\|\cdot\|_{\alpha,\infty}$.
\end{lemma}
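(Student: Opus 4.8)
The plan is to treat both the exact equation \eqref{lin:eq:def} and the truncated equation \eqref{LE:eq2} as fixed points of affine contractions on the Banach space $W_0^{\alpha,\infty}$ and to exploit the elementary fact that the fixed points of two close contractions are close. Writing $U_p$ for the truncated counterpart of $U$, namely $U_p(x)=\beta I_{0+}^1 S_p x-\gamma\int_0^\cdot [D_{0+}^\alpha S_p x](z)\,[D_{t-}^{1-\alpha}S_p(g(\cdot)-g(t))](z)\,dz$, the two equations read $X=x_0+U(X)$ and $X_p=x_0+U_p(X_p)$. Since both $U$ and $U_p$ are \emph{linear}, I would first record the identity
$$X_p-X=U_p(X_p)-U(X)=U_p(X_p-X)+(U_p-U)(X),$$
which, once $U_p$ is known to be a contraction with constant $q<1$, immediately yields the a priori bound $\|X_p-X\|_{\alpha,\lambda_0}\le (1-q)^{-1}\,\|(U_p-U)(X)\|_{\alpha,\lambda_0}$.

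First I would establish that $U_p$ is a contraction with a constant $q<1$ that is \emph{uniform} in $p$. Recall from \cite{nualart} that $\lambda_0$ was chosen so that the Lipschitz constant of $U$ in $\|\cdot\|_{\alpha,\lambda_0}$ is $\le\tfrac12$; by linearity this is the operator norm, so $\|U\|_{\alpha,\lambda_0}\le\tfrac12$. The same choice of $\lambda_0$ should work for $U_p$, because $U_p$ differs from $U$ only by the insertion of the Schauder partial-sum projections $S_p$, which are uniformly bounded on $C[0,1]$ and, since $\alpha<H$ and $1-\alpha<H$, remain uniformly bounded in the Hölder-type seminorms governing the estimate of the Riemann--Stieltjes integral behind Theorem \ref{DI}. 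The crucial structural point is that the bilinear estimate for that integral carries a constant multiplying $\|\cdot\|_{\alpha,\infty}$ and the Hölder norm of the integrator that tends to $0$ as the weight $\lambda$ grows; together with a uniform control of the Hölder norm of $S_p g$, this gives $\|U_p\|_{\alpha,\lambda_0}\le\tfrac12$ for all $p$.

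The main work, and the principal obstacle, is the second step: showing $\|(U_p-U)(X)\|_{\alpha,\lambda_0}\to0$. Splitting
$$(U_p-U)(X)=\beta\, I_{0+}^1\big(S_pX-X\big)+\gamma\Big(\int_0^\cdot S_pX\,d(S_pg)-\int_0^\cdot X\,dg\Big),$$
the first summand is handled as in Lemma \ref{lemma:cont1} but now in the stronger norm, using that $I_{0+}^1$ maps the supremum norm into $\mathbf{H}^\alpha[0,1]$. For the second summand I would add and subtract $\int_0^\cdot S_pX\,dg$ and invoke bilinearity together with the continuity of the Riemann--Stieltjes integral to reduce everything to the two convergences $S_pX\to X$ and $S_pg\to g$. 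The delicate point is that these convergences are now required not merely in the supremum norm (as sufficed in the Volterra case) but in the Hölder-type norms entering $\|\cdot\|_{\alpha,\infty}$. This is exactly where the hypotheses $\alpha<H$ and $1-\alpha<H$ are essential: the tails $\sum_{m>p}2^{m(\frac12-H)}\sum_k c_{m,k}e_{m,k}$ of the Takagi--Landsberg expansions of $X$ and $g$ must be shown to be small in the $\alpha$- and $(1-\alpha)$-Hölder seminorms, respectively, which I would obtain from the explicit decay bounds on $\tau^{1-\alpha}_{1,2^m+k}$ and $\tau^{\alpha}_{2,2^m+k}$ provided by Proposition \ref{uniformb}, since these yield level-$m$ contributions of size $2^{m(\alpha-H)}$ and $2^{m((1-\alpha)-H)}$ that are summable in $m$.

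Finally, combining the a priori bound of the first step with the convergence of the third step gives $\|X_p-X\|_{\alpha,\lambda_0}\to0$, and by the stated equivalence of $\|\cdot\|_{\alpha,\lambda_0}$ and $\|\cdot\|_{\alpha,\infty}$ the claim $\|X_p-X\|_{\alpha,\infty}\to0$ follows.
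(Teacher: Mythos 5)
Your overall architecture (two fixed points of nearby affine maps, hence an a priori bound $\|X_p-X\|\le(1-q)^{-1}\|(U_p-U)(X)\|$) is reasonable, and your second step is in spirit what the paper does, but the first step contains a genuine gap: you need $U_p$ to be a contraction in $\|\cdot\|_{\alpha,\lambda_0}$ with a constant $q<1$ \emph{uniform in $p$}, and this is asserted rather than proved. The contraction property of $U$ in \cite{nualart} is not a soft consequence of boundedness of $S_p$ on $C[0,1]$ or on H\"older spaces: it comes from the weighted norm $\|\cdot\|_{\alpha,\lambda}$ and the factor $\lambda^{-(1-2\alpha)}$, obtained by taking $\lambda=\lambda_0$ large. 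To transfer this to $U_p$, whose definition inserts $S_p$ both in the integrand and in the integrator, you would need the Schauder projections $S_p$ to be bounded on $(W_0^{\alpha,\infty},\|\cdot\|_{\alpha,\lambda_0})$ uniformly in $p$ with a constant that does not destroy the smallness coming from $\lambda_0^{-(1-2\alpha)}$. This cannot be deduced from boundedness in the unweighted norm, because the equivalence constants between $\|\cdot\|_{\alpha,\lambda}$ and $\|\cdot\|_{\alpha,\infty}$ grow like $e^{\lambda}$; and a direct uniform bound for $S_p$ in the weighted seminorm $\sup_t e^{-\lambda t}\int_0^t|f(t)-f(s)|(t-s)^{-\alpha-1}ds$ is nontrivial and nowhere established. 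The paper sidesteps this entirely by a different decomposition: writing
\begin{align*}
X_p-X &= \bigl[U(X_p)-U(X)\bigr]+\bigl[U(S_pX_p)-U(X_p)\bigr]+\gamma\int_0^{\cdot}S_pX_p\,d[S_pg-g],
\end{align*}
so that only the contraction property of the \emph{original} operator $U$ (applied to the pairs $(X_p,X)$ and $(S_pX_p,X_p)$) is ever used, yielding $\tfrac12\|X_p-X\|_{\alpha,\lambda}\le\tfrac12\|S_pX_p-X_p\|_{\alpha,\lambda}+|\gamma|\,\|\int_0^{\cdot}S_pX_p\,d[S_pg-g]\|_{\alpha,\lambda}$. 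No mapping property of $U_p$ is needed.

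A secondary inaccuracy: for the tail terms you invoke Proposition \ref{uniformb}, but that proposition bounds fractional derivatives, i.e.\ signed integrals such as $\sum_k\tau^{1-\alpha}_{1,2^m+k}(t)$; it does not control the quantity entering $\|\cdot\|_{\alpha,\infty}$, namely $\int_0^t|f(t)-f(s)|(t-s)^{-\alpha-1}ds$, where the absolute value sits \emph{inside} the integral. Proposition \ref{uniformb} (via \eqref{ineqDTemk1}) suffices only for the integrator part $\sup_s|D^{1-\alpha}_{t-}[S_pg-g-S_pg(t)+g(t)](s)|$, combined with the bilinear estimate of \cite[Propositions 4.2 and 4.4]{nualart}. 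For the integrand part the paper must prove, by hand, the estimate of $\int_0^t\bigl|\sum_{k}(e_{m,k}(t)-e_{m,k}(s))\bigr|(t-s)^{-\alpha-1}ds$ through the three-way split \eqref{intesumkm}--\eqref{intsumeq3}, giving the level-$m$ bound of order $2^{m(\alpha-H)}$. That computation is the technical heart of the proof and is missing from your proposal; without it, the claimed convergences $S_pX\to X$ (and $S_pX_p\to X_p$) in the $\alpha$-seminorm are unsupported.
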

\begin{proof}
Let $X$ be the solution of \eqref{lin:eq:def}. %weighted Takagi-Landsberg function. Then
Recall the operator $U(x)=\beta I_{0+}^1 x+\gamma \int_0^{\cdot}xdg, x\in \mathbf{H}^H[0,1]$ and consider the norm $\|\cdot\|_{\alpha,\lambda}$ with $\lambda>0.$ Then
\begin{align*}
\nonumber&\|X_p-X\|_{\alpha,\lambda}=\left\|\beta I^{1}_{0+}S_p X_p - \beta I^{1}_{0+}X+\gamma\int_0^\cdot S_p X_p d[ S_p g]- \gamma\int_0^\cdot X  d g\right\|_{\alpha,\lambda}\\
&\leq \left\| U (X_p) -U(X)\right\|_{\alpha,\lambda} +\left\|\beta I^{1}_{0+}[S_p X_p -X_p] + \gamma\int_0^\cdot S_p X_p d[ S_p g]- \gamma\int_0^\cdot X_p  d g\right\|_{\alpha,\lambda}\\
&\leq \left\| U (X_p) -U(X)\right\|_{\alpha,\lambda} + \left\| U (S_pX_p) -U(X_p)\right\|_{\alpha,\lambda}+\left\|\gamma\int_0^\cdot S_p X_p d[ S_p g]- \gamma\int_0^\cdot S_p X_p  d g\right\|_{\alpha,\lambda}.
\end{align*}
Since $\|U(x)-U(y)\|_{\alpha,\lambda}\leq \frac{1}{2}\|x-y\|_{\alpha,\lambda},$ then
\begin{equation}
 \label{lemma21}   \frac{\|X_p-X\|_{\alpha,\lambda}}{2}\leq \frac{\|S_p X_p-X_p\|_{\alpha,\lambda}}{2}  + |\gamma|\left\|\int_0^\cdot S_p X_p d[ S_p g -g]\right\|_{\alpha,\lambda}.
\end{equation}
By \cite[Propositions 4.2 and 4.4]{nualart}, there  exist  constants $d_1$ and $d_2$ such that the second norm in the RHS of \eqref{lemma21} is bounded above by
\begin{align*}
&\frac{d_2}{\lambda^{1-2\alpha}} \frac{1+\|S_p X_p\|_{\alpha,\lambda}}{\Gamma(1-\alpha)}\sup_{0<s<t<1}\left| D^{1-\alpha}_{t-}[S_p g- g- S_p(t)+g(t)](s)\right|\\
&\leq  \frac{d_2}{\lambda^{1-2\alpha}} \frac{1+\|S_p X_p\|_{\alpha,\lambda}}{\Gamma(1-\alpha)} \sum_{n=p}^\infty 2^{m(1-\alpha-H)}L c_1(1-\alpha)\to 0, p\to \infty,
\end{align*}
where the last inequality follows from   \eqref{ineqDTemk1}.

Similarly to \eqref{estSP1}, consider the norm
\begin{align}
\nonumber    &\|S_p X_p-X_p\|_{\alpha,\lambda}= \sup_{0<t<1}e^{-\lambda t} \left(|S_p X_p-X_p|(t)+\int_0^t\frac{|[S_p X_p-X_p](t)-[S_p X_p-X_p](s)|}{(t-s)^{\alpha+1}}ds\right)\\
\nonumber    &\leq \sup_{0<t<1}e^{-\lambda t} \left(\frac{Lt^{\alpha}}{\Gamma(1+\alpha)}\|x^H-S_p x^H\|_{\infty}+ \sum_{m=p}^\infty 2^{m(\frac{1}{2}-H)}L\int_0^t\frac{\left|\sum_{k=0}^{2^m-1}(e_{m,k}(t)-e_{m,k}(s))\right|}{(t-s)^{\alpha+1}}ds\right).
\end{align}
Consider the last integral in more detail. At first, note that \begin{align*}
    \sum_{k=0}^{2^m-1}e_{m,k}(t)&=2^{-\frac{m}{2}}\left((2^m t-k)\wedge(1+k-2^m t)\right)_+\\
    &=2^{-\frac{m}{2}}\left(2^m t-\lfloor2^m t\rfloor)\wedge(1+\lfloor2^m t\rfloor-2^m t)\right)=2^{-\frac{m}{2}}e_{0,0}(\{2^m t\}), t\in [0,1],
\end{align*}
 $|e_{0,0}(\{x\})-e_{0,0}(\{y\})|\leq 1,$ $|e_{0,0}(\{x\})-e_{0,0}(\{y\})|\leq |\{x\}-\{y\}|,x,y\geq 0.$
Therefore,
\begin{align}
\nonumber &\int_0^t\frac{\left|\sum_{k=0}^{2^m-1}(e_{m,k}(t)-e_{m,k}(s))\right|}{(t-s)^{\alpha+1}}ds=2^{-\frac{m}{2}}\int_0^t\frac{\left|e_{0,0}(\{2^m t\})-e_{0,0}(\{2^m s\})\right|}{(t-s)^{\alpha+1}}ds\\
\label{intesumkm}  &=  2^{m\alpha-\frac{m}{2}}\int_0^{2^m t}\frac{\left|e_{0,0}(\{2^m t\})-e_{0,0}(\{z\})\right|}{(2^m t-z)^{\alpha+1}}dz=  \frac{\left(\int_{0}^{\lfloor2^m t\rfloor-0.5}+\int_{\lfloor2^m t\rfloor-0.5}^{\lfloor2^m t\rfloor}+\int_{\lfloor2^m t\rfloor}^{2^m t}\right)(\ldots)dz}{2^{\frac{m}{2}-m\alpha}}.
\end{align}
The first integral in the RHS of \eqref{intesumkm} is bounded by $\int_{0}^{\lfloor2^m t\rfloor-0.5}\frac{dz}{(2^m t-z)^{\alpha+1}}\leq \frac{(2^mt-\lfloor2^m t\rfloor+0.5)^{-\alpha}}{\alpha}\leq 2^{\alpha}/\alpha.$ If $z\in (\lfloor2^m t\rfloor-0.5,\lfloor2^m t\rfloor),$ then $z=\lfloor2^m t\rfloor-1+\{z\},$ $2^m t-z=\{2^mt \}+1-\{z\}$ and the second integral in the RHS of \eqref{intesumkm} is less or equal than
\begin{align}
\nonumber&\int_{\lfloor2^m t\rfloor-0.5}^{\lfloor2^m t\rfloor}\frac{\left|\{2^m t\}\wedge(1-\{2^m t\})-(1-\{z\})\right|}{(\{2^m t\}+1-\{z\})^{\alpha+1}}dz\leq    \int_{\lfloor2^m t\rfloor-0.5}^{\lfloor2^m t\rfloor}\frac{dz}{(\{2^m t\}+1-\{z\})^{\alpha}}\\
\label{intsumeq2}&\leq \frac{1}{1-\alpha}\left((0.5+\{2^m t\})^{1-\alpha}-(\{2^m t\})^{1-\alpha}\right)\leq \frac{2^{\alpha-1}}{1-\alpha}.
\end{align}
If $z\in (\lfloor2^m t\rfloor,2^m t),$ then  $2^m t-z=\{2^m t\}-\{z\}$ and $\left|e_{0,0}(\{2^m t\})-e_{0,0}(\{z\})\right|\leq \{2^m t\}-\{z\}=2^m t-z.$ Thus, the third integral in the RHS of \eqref{intesumkm}  equals
\begin{equation}
\label{intsumeq3}
\int_{\lfloor2^m t\rfloor}^{2^m t}\frac{\left|e_{0,0}(\{2^m t\})-e_{0,0}(\{z\})\right|}{(2^m t-z)^{\alpha+1}}dz\leq \int_{\lfloor2^m t\rfloor}^{2^m t}\frac{dz}{(2^m t-z)^{\alpha}}=\frac{(\{2^m t\})^{1-\alpha}}{1-\alpha}\leq \frac{1}{1-\alpha}.
\end{equation}
Hence, we get from \eqref{intsumeq2} and \eqref{intsumeq3} that the upper bound for the right hand side  of \eqref{intesumkm}  is
$2^{m\alpha-\frac{m}{2}}\left(\frac{2^{\alpha}}{\alpha}+\frac{2^{\alpha-1}+1}{1-\alpha}\right).$ Finally, the norm $\|S_p X_p-X_p\|_{\alpha,\lambda}$ is bounded by
\begin{align}
\label{lemma:p15}  &\sup_{0<t<1}e^{-\lambda t} \left(\frac{Lt^{\alpha}}{\Gamma(1+\alpha)}\|x^H-S_p x^H\|_{\infty}+ L\left(\frac{2^{\alpha}}{\alpha}+\frac{2^{\alpha-1}+1}{1-\alpha}\right) \sum_{m=p}^\infty 2^{m(\alpha-H)}\right).
\end{align}
Note that $\alpha<H$ in equation \eqref{lin:eq:def}.
Therefore, the right hand side of \eqref{lemma:p15} tends to 0 as $p\to \infty.$
It was shown in the proof of Lemma \ref{lemma:cont1} that the first term in  \eqref{lemma21} tends to 0 as $p\to \infty.$
Thus, $\|X_p-X\|_{\alpha,\infty}\to 0, p\to \infty.$
\end{proof}

\subsection{Numerical experiments: the Volterra integral  equation }
In this section we illustrate our method of solution of  \eqref{Lang:eq:def} by numerical examples.

Let $0<H<\alpha\in (0,1)$ and put $g(t)=t^H(1-t^\alpha),\;t\in [0,1].$ Then the solution of equation $X(t)=\frac{\Gamma(\alpha+H+1)}{\Gamma(H+1)}I_{0+}^\alpha X(t) +t^H(1-t^\alpha),t\in [0,1]$ obviously equals $\{X(t)=t^H,t\in[0,1]\}$.

We solve truncated equation \eqref{Lang:eq2} by Lemma \ref{Lang:sol} for several combinations of $\alpha$ and $ H$. For each case we compute the norm of the error $\|X-X_p\|_{\infty},$ where $X_p$ is the solution of truncated equation, and present them on Table \ref{tb1}.  %The solution $X_p$ is presented on Figure \ref{Fig1}.
%The values of the error $X(t)-X_p(t)$ are given on Figure \ref{Fig2}.
%Note that even for $p=5$ the norm of the error $\|X-X_p\|_{\infty}$ is quite small and  equals $5.87\times 10^{-4}.$
%\begin{figure}[h]
%    \centering
%    \includegraphics[width=0.8\linewidth]{SolutionxH.png}
%    \caption{Numerical solution $X_p$}
%    \label{Fig1}
%\end{figure}
%\begin{figure}[h]
%    \centering
%    \includegraphics[width=0.8\linewidth]{ErroxH.png}
%    \caption{Error $X-X_p$}
%    \label{Fig2}
%\end{figure}

\begin{table}[]
\footnotesize
    \centering
\begin{tabular}{lllllllll}
& $H$=0.01& $H$=0.2 & $H$=0.2 & $H$=0.2 & $H$=0.5& $H$=0.5 & $H$=0.8 & $H$=0.8 \\
$p$  & $\alpha$=0.05 &  $\alpha$=0.3 & $\alpha$=0.5 &  $\alpha$=0.8 & $\alpha$=0.51 &$\alpha$=0.8 & $\alpha$=0.81 &  $\alpha$=0.9 \\
\hline
3  & 2.33e-01       & 6.76e-02     & 5.83e-02     & 5.04e-02     & 2.38e-02      & 2.04e-02     & 5.60e-03      & 5.39e-03     \\
4  & 1.92e-01       & 4.32e-02     & 2.66e-02     & 2.25e-02     & 9.02e-03      & 7.56e-03     & 1.78e-03      & 1.71e-03     \\
5  & 1.62e-01       & 2.83e-02     & 1.53e-02     & 9.96e-03     & 3.34e-03      & 2.76e-03     & 5.50e-04      & 5.28e-04     \\
6  & 1.39e-01       & 1.89e-02     & 9.16e-03     & 4.37e-03     & 1.23e-03      & 9.97e-04     & 1.68e-04      & 1.61e-04     \\
7  & 1.21e-01       & 1.28e-02     & 5.53e-03     & 1.91e-03     & 5.97e-04      & 3.58e-04     & 5.06e-05      & 4.85e-05     \\
8  & 1.07e-01       & 8.75e-03     & 3.35e-03     & 8.35e-04     & 2.92e-04      & 1.28e-04     & 1.51e-05      & 1.45e-05     \\
9  & 9.48e-02       & 6.02e-03     & 2.04e-03     & 3.64e-04     & 1.43e-04      & 4.54e-05     & 4.50e-06      & 4.31e-06     \\
10 & 8.50e-02       & 4.17e-03     & 1.25e-03     & 1.71e-04     & 7.07e-05      & 1.61e-05     & 1.33e-06      & 1.27e-06
\end{tabular}
    \caption{Volterra integral equation: norms of the error $\|X-X_p\|_{\infty}$}
    \label{tb1}
\end{table}

\subsection{Numerical experiments: linear integral  equation }
In this section we consider the numerical solution of \eqref{lin:eq:def}.

First, we put  $g(t)=0.5^H - |t-0.5|^H,\,t\in [0,1]$ for $H\in (0.5,1),$ and $\beta=-2,$ $\gamma=3, x_0=1$ in \eqref{lin:eq:def}. We take $p=6,$ $H\in\{0.51,0.6,0.7,0.8,0.9\},$ solve truncated equation \eqref{LE:eq2} by Lemma \ref{LE:sol} and get the Takagi-Landsberg representation of the truncated solution $X_p$ with coefficients $x_0,x_1^p,c_p$. We present the values of the error's norm $\|X-X_p\|_{\infty}$ in Table \ref{tb2}, where $X(t)=x_0\exp(\beta t+\gamma g(t)),t\in[0,1]$ is the exact solution. Moreover, we compute the difference between the exact coefficients $x_1,c^x$ in the representation of $X$ and $x^p_1,c^p.$ The values of $\max_{1\leq n\leq2^{p+1}}|c^x_n-c^p_n|$ are given in Table \ref{tb2}.

%\begin{figure}[h]
%    \centering
%    \includegraphics[width=0.8\linewidth]{ErroxH.png}
%    \caption{Error $X-X_p$}
%    \label{Fig2}
%\end{figure}

\begin{table}[]
\footnotesize
    \centering
\begin{tabular}{lllllll}
& $H$=0.51& $H$=0.6 & $H$=0.7 & $H$=0.8 & $H$=0.9& $H$=0.99 \\
\hline
$\|X-X_p\|_{\infty}$ & 0.18934 & 0.08398 & 0.03218 & 0.01142 & 0.00325 & 0.00047   \\
$\max_{1\leq n\leq2^{p+1}}|c^x_n-c^p_n|$  &  0.03701 & 0.01305 & 0.00409& 0.00124 & 0.00043 & 0.00028
\end{tabular}
    \caption{Linear equation: Description of the error $X-X_p$}
    \label{tb2}
\end{table}

Second, we illustrate our method with the function $g(t)=\sum_{m=0}^7\sum_{k=0}^{2^m-1}c^g_{m,k} e_{m,k}(t),t\in[0,1],$ where $c^g$ are  some bounded coefficients (we simulate them randomly). The example of function $g,$  the corresponding exact $X$ and truncated $X_p$ $(p=6)$  solutions of \eqref{lin:eq:def} with $H=0.51$ are presented on Figure \ref{Fig1}. One can observe that the small difference between the exact and truncated solution.
Moreover, if we increase the value of $p=7,$ then the graphs of $X$ and $X_p$ for $H=0.501$ become visually indistinguishable and the computed norm of the error $\|X-X_p\|_{\infty}$ is 0.01888 for this example.

From the other hand, the wrong value of $H,$ which is greater than the H\"{older} exponent of $g,$ affects on solution $X_p$ and the error between $X_p$ and $X$ increases. We illustrate such mis-specification of $H$ on Figure \ref{Fig2}, where one clearly see the difference between the exact solution $X$ and numerical solution $X_p$ when $H$ is significantly larger than true value 0.5.
\begin{figure}[h]
\begin{minipage}{0.48\linewidth}
\centering
\includegraphics[width=\linewidth]{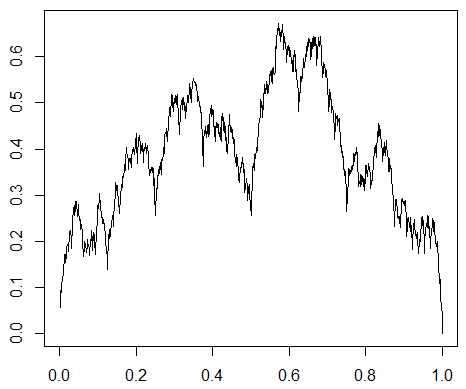}
\end{minipage}
\begin{minipage}{0.48\linewidth}
\centering
\includegraphics[width=\linewidth]{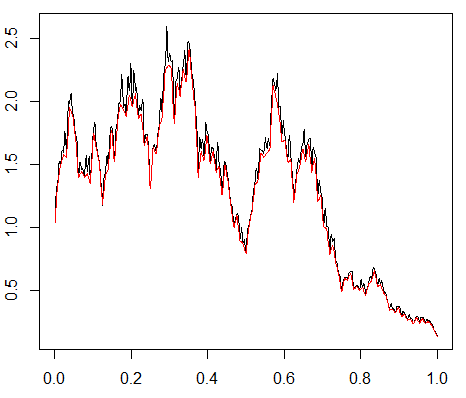}
\end{minipage}
\label{Fig1}
\caption{Left: function $g.$ Right: solutions $X$ (black) and $X_p$ (red) for $H=0.51$.}
\end{figure}
\begin{figure}
\begin{minipage}{0.48\linewidth}
\centering
\includegraphics[width=\linewidth]{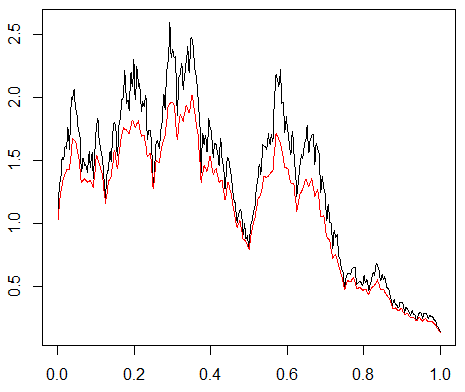}
\end{minipage}
\begin{minipage}{0.48\linewidth}
\centering
\includegraphics[width=\linewidth]{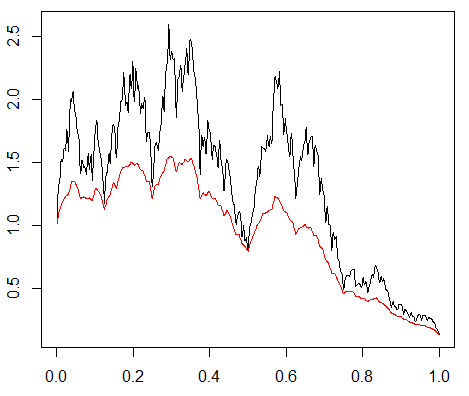}\\
\end{minipage}
\caption{The mis-specification of $H=0.6$ (left) and $H=0.8$ (right): graphs of $X$ (black) and $X_p$ (red).}
\label{Fig2}
\end{figure}
\bibliographystyle{abbrv}
\bibliography{Lib}

\begin{thebibliography}{10}

\bibitem{abram}
M.~Abramowitz and I.~A. Stegun, editors.
\newblock {\em Handbook of mathematical functions with formulas, graphs, and
  mathematical tables}.
\newblock A Wiley-Interscience Publication. John Wiley \& Sons, Inc., New York;
  John Wiley \& Sons, Inc., New York, 1984.
\newblock Reprint of the 1972 edition, Selected Government Publications.

\bibitem{allaart}
P.~C. Allaart and K.~Kawamura.
\newblock The {T}akagi function: a survey.
\newblock {\em Real Anal. Exchange}, 37(1):1--54, 2011/12.

\bibitem{embr}
P.~Embrechts and M.~Maejima.
\newblock {\em Selfsimilar processes}.
\newblock Princeton Series in Applied Mathematics. Princeton University Press,
  Princeton, NJ, 2002.

\bibitem{Fa}
K.~S. Fa.
\newblock Fractional langevin equation and riemann-liouville fractional
  derivative.
\newblock {\em The European Physical Journal E}, 24(2):139--143, 2007.

\bibitem{folmer}
H.~F\"{o}llmer.
\newblock Calcul d'{I}t\^{o} sans probabilit\'{e}s.
\newblock In {\em Seminar on {P}robability, {XV} ({U}niv. {S}trasbourg,
  {S}trasbourg, 1979/1980) ({F}rench)}, volume 850 of {\em Lecture Notes in
  Math.}, pages 143--150. Springer, Berlin, 1981.

\bibitem{kantor}
L.~V. Kantorovich and G.~P. Akilov.
\newblock {\em Functional analysis}.
\newblock Pergamon Press, Oxford-Elmsford, N.Y., second edition, 1982.
\newblock Translated from the Russian by Howard L. Silcock.

\bibitem{kashin}
B.~S. Kashin and A.~A. Saakyan.
\newblock {\em Orthogonal series}, volume~75 of {\em Translations of
  Mathematical Monographs}.
\newblock American Mathematical Society, Providence, RI, 1989.
\newblock Translated from the Russian by Ralph P. Boas, Translation edited by
  Ben Silver.

\bibitem{kobelev}
V.~Kobelev and E.~Romanov.
\newblock Fractional langevin equation to describe anomalous diffusion.
\newblock {\em Progress of Theoretical Physics Supplement}, 139:470--476, 2000.

\bibitem{Lagarias}
J.~C. Lagarias.
\newblock The {T}akagi function and its properties.
\newblock In {\em Functions in number theory and their probabilistic aspects},
  RIMS K\^{o}ky\^{u}roku Bessatsu, B34, pages 153--189. Res. Inst. Math. Sci.
  (RIMS), Kyoto, 2012.

\bibitem{landsberg}
G.~Landsberg.
\newblock {\"U}ber differentiierbarkeit stetiger funktionen.
\newblock {\em Jahresbericht der Deutschen Mathematiker-Vereinigung},
  17:46--51, 1908.

\bibitem{Lutz}
E.~Lutz.
\newblock Fractional {L}angevin equation.
\newblock In {\em Fractional dynamics}, pages 285--305. World Sci. Publ.,
  Hackensack, NJ, 2012.

\bibitem{pironi}
F.~Mainardi and P.~Pironi.
\newblock The fractional {L}angevin equation: {B}rownian motion revisited.
\newblock {\em Extracta Math.}, 11(1):140--154, 1996.

\bibitem{ms}
Y.~Mishura and A.~Schied.
\newblock On (signed) takagi--landsberg functions: pth variation, maximum, and
  modulus of continuity.
\newblock {\em Journal of Mathematical Analysis and Applications},
  473(1):258--272, 2019.

\bibitem{nualart}
D.~Nualart and A.~R\u{a}\c{s}canu.
\newblock Differential equations driven by fractional {B}rownian motion.
\newblock {\em Collect. Math.}, 53(1):55--81, 2002.

\bibitem{skm}
S.~G. Samko, A.~A. Kilbas, and O.~I. Marichev.
\newblock {\em Fractional integrals and derivatives}.
\newblock Gordon and Breach Science Publishers, Yverdon, 1993.

\bibitem{Schied}
A.~Schied.
\newblock On a class of generalized {T}akagi functions with linear pathwise
  quadratic variation.
\newblock {\em J. Math. Anal. Appl.}, 433(2):974--990, 2016.

\bibitem{west}
B.~J. West and S.~Picozzi.
\newblock Fractional langevin model of memory in financial time series.
\newblock {\em Phys. Rev. E}, 65:037106, Mar 2002.

\bibitem{Zahle}
M.~Z\"{a}hle.
\newblock Integration with respect to fractal functions and stochastic
  calculus. {I}.
\newblock {\em Probab. Theory Related Fields}, 111(3):333--374, 1998.

\end{thebibliography}
\end{document}